\begin{document}

\renewcommand{\thefootnote}{\fnsymbol{footnote}}

\newtheorem{theorem}{Theorem}[section]
\newtheorem{corollary}[theorem]{Corollary}
\newtheorem{definition}[theorem]{Definition}
\newtheorem{conjecture}[theorem]{Conjecture}
\newtheorem{question}[theorem]{Question}
\newtheorem{lemma}[theorem]{Lemma}
\newtheorem{proposition}[theorem]{Proposition}
\newtheorem{example}[theorem]{Example}
\newenvironment{proof}{\noindent {\bf
Proof.}}{\rule{3mm}{3mm}\par\medskip}
\newcommand{\remark}{\medskip\par\noindent {\bf Remark.~~}}
\newcommand{\pp}{{\it p.}}
\newcommand{\de}{\em}

\newcommand{\JEC}{{\it Europ. J. Combinatorics},  }
\newcommand{\JCTB}{{\it J. Combin. Theory Ser. B.}, }
\newcommand{\JCT}{{\it J. Combin. Theory}, }
\newcommand{\JGT}{{\it J. Graph Theory}, }
\newcommand{\ComHung}{{\it Combinatorica}, }
\newcommand{\DM}{{\it Discrete Math.}, }
\newcommand{\ARS}{{\it Ars Combin.}, }
\newcommand{\SIAMDM}{{\it SIAM J. Discrete Math.}, }
\newcommand{\SIAMADM}{{\it SIAM J. Algebraic Discrete Methods}, }
\newcommand{\SIAMC}{{\it SIAM J. Comput.}, }
\newcommand{\ConAMS}{{\it Contemp. Math. AMS}, }
\newcommand{\TransAMS}{{\it Trans. Amer. Math. Soc.}, }
\newcommand{\AnDM}{{\it Ann. Discrete Math.}, }
\newcommand{\NBS}{{\it J. Res. Nat. Bur. Standards} {\rm B}, }
\newcommand{\ConNum}{{\it Congr. Numer.}, }
\newcommand{\CJM}{{\it Canad. J. Math.}, }
\newcommand{\JLMS}{{\it J. London Math. Soc.}, }
\newcommand{\PLMS}{{\it Proc. London Math. Soc.}, }
\newcommand{\PAMS}{{\it Proc. Amer. Math. Soc.}, }
\newcommand{\JCMCC}{{\it J. Combin. Math. Combin. Comput.}, }
\newcommand{\GC}{{\it Graphs Combin.}, }
\newcommand{\LAA}{{\it Linear Algeb. Appli.}, }

\title{ \bf The skew-rank of oriented graphs\thanks{
Supported by NSFC No.11301302 and 11371205, China Postdoctoral
Science Foundation No.2013M530869, and the Natural Science
Foundation of Shandong No.BS2013SF009.  $\dag$ Corresponding
author.}}

\author{\small Xueliang Li $^a$, Guihai Yu $^{a,b, \dag}$  \\
{\small $^a$Center for Combinatorics and LPMC-TJKLC}\\
{\small Nankai University, Tianjin 300071, China.}\\
{\small $^b$Department of Mathematics} \\
{\small Shandong Institute of Business and Technology}\\
{\small Yantai, Shandong 264005, China.}\\
{\small E-mail: {\tt lxl@nankai.edu.cn; yuguihai@126.com}}\\
}

\maketitle
\vspace{-0.5cm}

\begin{abstract}
An oriented graph $G^\sigma$ is a digraph without loops and multiple
arcs, where $G$ is called the underlying graph of $G^\sigma$. Let
$S(G^\sigma)$ denote the skew-adjacency matrix of $G^\sigma$. The
rank of the skew-adjacency matrix of $G^\sigma$ is called the {\it
skew-rank} of $G^\sigma$, denoted by $sr(G^\sigma)$. The
skew-adjacency matrix of an oriented graph is skew symmetric and the
skew-rank is even. In this paper we consider the skew-rank of simple
oriented graphs. Firstly we give some preliminary results about the
skew-rank. Secondly we characterize the oriented graphs with
skew-rank 2 and characterize the oriented graphs with pendant
vertices which attain the skew-rank 4. As a consequence, we list the
oriented unicyclic graphs, the oriented bicyclic graphs with pendant
vertices which attain the skew-rank 4. Moreover, we determine the
skew-rank of oriented unicyclic graphs of order $n$ with girth $k$
in terms of matching number. We investigate the minimum value of the
skew-rank among oriented unicyclic graphs of order $n$ with girth
$k$ and characterize oriented unicyclic graphs attaining the minimum
value. In addition, we consider oriented unicyclic graphs whose
skew-adjacency matrices are nonsingular.
\end{abstract}

\noindent
{{\bf Key words:} Oriented graph; Skew-adjacency matrix; Skew-rank. } \\
{{\bf AMS Classifications:} 05C20, 05C50, 05C75. } \vskip 0.1cm

\section{Introduction}
\vskip 0.1cm

 \ \quad Let $G$ be a simple graph of order $n$ with vertex
set $V(G)=\{v_1,v_2,\cdots,v_n\}$ and edge set $E(G)$. The
\emph{adjacency matrix} $A(G)$ of a graph $G$ of order $n$ is the
$n\times n$ symmetric 0-1 matrix $(a_{ij})_{n\times n}$ such that
$a_{ij}=1$ if $v_i$ and $v_j$ are adjacent and 0, otherwise. We
denote by $Sp(G)$ the spectrum of $A(G)$. The rank of $A(G)$ is
called to be the rank of $G$, denoted by $r(G)$. Let $G^\sigma$ ba a
graph with an orientation which assigns to each edge of $G$ a
direction so that $G^\sigma$ becomes an oriented graph. The graph
$G$ is called the \emph{underlying graph} of $G^\sigma$. The
\emph{skew-adjacency matrix} associated to the oriented graph
$G^\sigma$ is defined as the $n\times n$ matrix
$S(G^\sigma)=(s_{ij})$ such that $s_{ij}=1$ if there has an arc from
$v_i$ to $v_j$, $s_{ij}=-1$ if there has an arc from $v_j$ to $v_i$
and $s_{ij}=0$ otherwise. Obviously, the skew-adjacency matrix is
skew symmetric. The \emph{skew-rank} of an oriented graph
$G^\sigma$, denoted by $sr(G^\sigma)$, is defined as the rank of the
skew-adjacency matrix $S(G^\sigma)$. The \emph{skew-spectrum}
$Sp(G^\sigma)$ of $G^\sigma$ is defined as the spectrum of
$S(G^\sigma)$. Note that $Sp(G^\sigma)$ consists of only purely
imaginary eigenvalues and the skew-rank of an oriented graph is
even.

Let $C_k^\sigma=u_1u_2\cdots u_ku_1$ be an even oriented cycle. The
\emph{sign} of the even cycle $C_k^\sigma$, denoted by
$sgn(C_k^\sigma)$, is defined as the sign of $\prod_{i=1}^k
s_{u_{i}u_{i+1}}$ with $u_{k+1}=u_{1}$. An even oriented cycle
$C_k^\sigma$ is called {\it evenly-oriented} ({\it oddly-oriented})
if its sign is positive (negative). If every even cycle in
$G^\sigma$ is evenly-oriented, then $G^\sigma$ is called {\it
evenly-oriented}. An oriented graph is called an \emph{elementary
oriented graph} if such an oriented graph is $K_2$ or a cycle with
even length. An oriented graph $\mathscr{H}$ is called a \emph{basic
oriented graph} if each component of $\mathscr{H}$ is an elementary
oriented graph.

The oriented graph $G^\sigma$ is called {\it multipartite} if its
underlying graph $G$ is {\it multipartite}. An \textit{induced
subgraph} of $G^\sigma$ is an induced subgraph of $G$ and each edge
preserves the original orientation in $G^\sigma$. For an induced
subgraph $H^\sigma$ of $G^\sigma$, let $G^\sigma-H^\sigma$ be the
subgraph obtained from $G_w$ by deleting all vertices of $H_w$ and
all incident edges. For $V^{\prime}\subseteq V(G^\sigma)$,
$G^\sigma-V^{\prime}$ is the subgraph obtained from $G^\sigma$ by
deleting all vertices in $V^{\prime}$ and all incident edges. A
vertex of a graph $G^\sigma$ is called \textit{pendant} if it is
only adjacent to one vertex, and is called \textit{quasi-pendant} if
it is adjacent to a pendant vertex. A set $M$ of edges in $G^\sigma$
is a \emph{matching} if every vertex of $G^\sigma$ is incident with
at most one edge in $M$. It is \emph{perfect matching} if every
vertex of $G^\sigma$ is incident with exactly one edge in $M$. We
denote by $m_{_{G^\sigma}}(i)$ the number of matchings of $G^\sigma$
with $i$ edges and by $\beta(G^\sigma)$ the \emph{matching number}
of $G^\sigma$ (i.e. the number of edges of a maximum matching in
$G^\sigma$). For an oriented graph $G^\sigma$ on at least two
vertices, a vertex $v\in V(G^\sigma)$ is called {\it unsaturated} in
$G_w$ if there exists a maximum matching $M$ of $G^\sigma$ in which
no edge is incident with $v$; otherwise, $v$ is called {\it
saturated} in $G_w$. Denote by $P_n$, $S_n$, $C_n$, $K_n$ a path, a
star, a cycle and a complete
 graph all of which are simple unoriented graphs of order $n$, respectively.
 $K_{n_1,n_2,\cdots,n_r}$ represents a complete $r$-partite unoriented
 graphs. A
graph is called \emph{trivial} if it has one vertex and no edges.

Recently the study of the skew-adjacency matrix of oriented graphs
attracted some attentions. Cavers et. al \cite{Cavers} provided a
paper about the skew-adjacency matrices in which authors considered
the following topics: graphs whose skew-adjacency matrices are all
cospectral; relations between the matching polynomial of a graph and
the characteristic polynomial of its adjacency and skew-adjacency
matrices; skew-spectral radii and an analogue of the
Perron-Frobenius theorem; and the number of skew-adjacency matrices
of a graph with distinct spectra. Anuradha and Balakrihnan
\cite{anuradha} investigated skew spectrum of the Cartesian product
of an oriented graph with a oriented Hypercube. Anuradha et. al
\cite{anuradha2} considered the skew spectrum of special bipartite
graphs and solved a conjecture of Cui and Hou \cite{cui hou}. Hou et
al \cite{hou1} gave an expression of the coefficients of the
characteristic polynomial of the skew-adjacency matrix
$S(G^\sigma)$. As its applications, they present new combinatorial
proofs of some known results. Moreover, some families of oriented
bipartite graphs with $Sp(S(G^\sigma))=iSp(G)$ were given. Gong et
al \cite{gong1} investigated the coefficients of weighted oriented
graphs. In addition they established recurrences for the
characteristic polynomial and deduced a formula for the matching
polynomial of an arbitrary weighted oriented graph. Xu \cite{xu}
established a relation between the spectral radius and the skew
spectral radius. Also some results on the skew-spectral radius of an
oriented graph and its oriented subgraphs were derived. As
applications, a sharp upper bound of the skew-spectral radius of
oriented unicyclic graphs was present. Some authors investigated the
skew-energy of oriented graphs, one can refer to \cite{adiga, chen,
hou2, gong2, li, tian, zhu}.

This paper is organized as follows. In Section 2, we list some
preliminary results. In Section 3, we characterize the connected
oriented graphs which attaining the skew-rank 2 and determine the
oriented graphs with pendant vertex which attaining the skew-rank 4.
As a consequence, we investigate oriented unicyclic graphs, oriented
bicyclic graphs of order $n$ with pendant vertices which attain the
skew-rank 4, respectively. In Section 4, we determine the skew-rank
of unicyclic graphs of order $n$ with fixed girth in terms of
matching number. Moreover we study the minimum value of skew-rank of
the oriented unicyclic graphs of order $n$ with fixed girth and
characterize oriented graphs with the minimum skew-rank. In Section
5, we consider the non-singularity of the skew-adjacency matrices of
oriented unicyclic graphs.

\section{Preliminary Results}
\ \quad The following results are fundamental. Here we omit their
proofs.

\begin{lemma}\label{basic lemma} 
\begin{enumerate}  [(i).]
\item Let $H^\sigma$ be an induced subgraph of $G^\sigma$. Then $sr(H^\sigma)\leq sr(G^\sigma).$
\item Let $G^\sigma=G_1^\sigma\cup G_2^\sigma \cup \cdots \cup
G_t^\sigma$, where $G_1^\sigma$, $G_2^\sigma$, $\cdots$,
$G_t^\sigma$ are connected components of $G^\sigma$. Then
$sr(G^\sigma)=\sum_{i=1}^{t} sr(G_i^\sigma)$.
\item Let $G^\sigma$ be an oriented graph on $n$ vertices. Then
$sr(G^\sigma)=0$ if and only if $G^\sigma$ is a graph without edges
(empty graph).
\end{enumerate}
\end{lemma}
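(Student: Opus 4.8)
The plan is to reduce all three parts to elementary facts about the matrix $S(G^\sigma)$, using that the skew-adjacency matrix of an induced oriented subgraph appears inside $S(G^\sigma)$ as a principal submatrix and that the skew-adjacency matrix of a disconnected oriented graph is block diagonal.

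For (i), I would relabel the vertices of $G^\sigma$ so that those of the induced subgraph $H^\sigma$ come first; since an induced oriented subgraph keeps every edge of $G$ between its vertices together with its original orientation, this puts $S(G^\sigma)$ in the form $\left(\begin{smallmatrix} S(H^\sigma) & B \\ -B^{\mathsf T} & C\end{smallmatrix}\right)$, so that $S(H^\sigma)$ is a submatrix of $S(G^\sigma)$. Because the rank of a matrix equals the largest order of one of its nonsingular square submatrices, every nonsingular square submatrix of $S(H^\sigma)$ is also one of $S(G^\sigma)$, whence $sr(H^\sigma)=\operatorname{rank}S(H^\sigma)\le\operatorname{rank}S(G^\sigma)=sr(G^\sigma)$. (Equivalently: deleting a vertex removes one row and one column, each dropping the rank by at most one; iterate.)

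For (ii), ordering the vertices component by component and using that there is no arc between distinct components makes $S(G^\sigma)=S(G_1^\sigma)\oplus\cdots\oplus S(G_t^\sigma)$ block diagonal, and the rank of a block-diagonal matrix is the sum of the ranks of the blocks, giving $sr(G^\sigma)=\sum_{i=1}^{t}sr(G_i^\sigma)$. For (iii), $sr(G^\sigma)=0$ is equivalent to $S(G^\sigma)$ being the zero matrix, i.e. $s_{ij}=0$ for all $i,j$, which by the definition of $S(G^\sigma)$ holds exactly when $G^\sigma$ has no arcs, i.e. $G$ has no edges; the converse direction is immediate.

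None of this presents a real obstacle. The only point deserving a line of care is in (i): one must check that the notion of induced oriented subgraph genuinely yields a principal submatrix with the same entries (rather than one with altered signs), and this is precisely what the convention ``each edge preserves the original orientation in $G^\sigma$'' guarantees.
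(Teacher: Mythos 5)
Your proof is correct; the paper explicitly omits the proof of this lemma as ``fundamental,'' and your argument supplies exactly the standard justification one would expect (principal submatrix for (i), block-diagonal decomposition for (ii), and rank zero iff zero matrix for (iii)). The one point you flag --- that an induced oriented subgraph yields a genuine principal submatrix because orientations are preserved --- is indeed the only detail worth a sentence, and you handle it properly.
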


As we know, the oriented tree and its underlying graph have the same
spectrum \cite{hou1,shader}. So the following is immediate from
\cite{cvetkovic}.
\begin{lemma}\label{rank of tree}
Let $T^\sigma$ be an oriented tree with matching number $\beta(T)$.
Then
$$sr(T^\sigma)=r(T)=2\beta(T).$$
\end{lemma}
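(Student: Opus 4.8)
The plan is to reduce the statement to two already-known classical facts and a short observation about oriented trees. First I would recall that the skew-adjacency matrix $S(T^\sigma)$ of an oriented tree satisfies $S(T^\sigma) = D A(T) D^{-1}$ for a suitable diagonal $\pm 1$ matrix $D$, up to a sign twist; more precisely, since a tree has no cycles, every orientation of $T$ is switching-equivalent to any other, and in particular to the orientation whose skew-adjacency matrix is obtained from $A(T)$ by negating the entries below the diagonal in a fixed ordering. The upshot, which is exactly the cited fact from \cite{hou1,shader}, is that $S(T^\sigma)$ and $A(T)$ have the same characteristic polynomial up to the substitution $x \mapsto ix$, hence the same rank. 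This gives $sr(T^\sigma) = r(T)$ immediately.

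Next I would invoke the classical result from \cite{cvetkovic} that the rank of the adjacency matrix of a forest equals twice its matching number, i.e. $r(T) = 2\beta(T)$ for any forest $T$. Combining this with the previous paragraph yields $sr(T^\sigma) = r(T) = 2\beta(T)$, as claimed. Alternatively, if one prefers a self-contained argument for $r(T) = 2\beta(T)$, one can proceed by induction on $|V(T)|$: pick a pendant vertex $u$ with neighbor $v$; if $T$ has an edge, expanding $\det$ along the row of $u$ shows that $r(T) = 2 + r(T - u - v)$ and $\beta(T) = 1 + \beta(T - u - v)$ (since some maximum matching uses the edge $uv$), while if $T$ has no edges both sides are $0$; the inductive hypothesis then closes the case. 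The same pendant-vertex deletion works directly at the level of $S(T^\sigma)$, using that deleting a pendant vertex and its neighbor drops the skew-rank by exactly $2$ — this avoids even citing the switching equivalence.

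The argument is essentially routine; there is no serious obstacle. The only point requiring a little care is the base/degenerate cases of the induction (a tree that is a single vertex, or more generally a forest reduced to isolated vertices), where one checks that $sr = r = 0 = 2\beta$, consistent with Lemma \ref{basic lemma}(iii). I would therefore present the short proof: cite the spectral correspondence for oriented trees to get $sr(T^\sigma) = r(T)$, then cite $r(T) = 2\beta(T)$ from \cite{cvetkovic}, and remark that both identities also follow directly by the pendant-vertex recursion.
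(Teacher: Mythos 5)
Your proposal is correct and follows essentially the same route as the paper, which likewise just cites the cospectrality of an oriented tree with its underlying tree (\cite{hou1,shader}) to get $sr(T^\sigma)=r(T)$ and then the classical identity $r(T)=2\beta(T)$ from \cite{cvetkovic}. Your optional pendant-vertex induction is a fine self-contained alternative (and matches the paper's own Lemma \ref{deleting pendent vertex}), but it is not needed beyond the two citations.
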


The next result is an immediate result of Lemma \ref{rank of tree}.
\begin{lemma}\label{rank of path}
Let $P^\sigma_n$ be an oriented path of order $n$. Then
$sr(P^\sigma_n)=\left\{\begin{array}{cc}n-1,&\mbox{$n$ is
odd,}\\n,&\mbox{$n$ is even.}\end{array}\right.$
\end{lemma}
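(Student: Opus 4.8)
The plan is to invoke Lemma~\ref{rank of tree} directly, since an oriented path $P_n^\sigma$ is in particular an oriented tree. Thus $sr(P_n^\sigma)=r(P_n)=2\beta(P_n)$, and everything reduces to determining the matching number $\beta(P_n)$ of the path on $n$ vertices.

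First I would record the elementary fact that $\beta(P_n)=\lfloor n/2\rfloor$. For the lower bound, exhibit an explicit matching: if $P_n=v_1v_2\cdots v_n$, take the edges $v_1v_2,\ v_3v_4,\ \ldots$, which gives $\lfloor n/2\rfloor$ pairwise non-adjacent edges. For the upper bound, note that any matching with $t$ edges saturates $2t$ distinct vertices, so $2t\le n$, i.e. $t\le\lfloor n/2\rfloor$. Hence $\beta(P_n)=\lfloor n/2\rfloor$.

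Substituting into Lemma~\ref{rank of tree} gives $sr(P_n^\sigma)=2\lfloor n/2\rfloor$. Finally I would split into the two parities: if $n$ is even then $2\lfloor n/2\rfloor=n$, and if $n$ is odd then $2\lfloor n/2\rfloor=n-1$, which is exactly the claimed formula.

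There is essentially no obstacle here: the statement is an immediate corollary of Lemma~\ref{rank of tree} once the (routine) value of $\beta(P_n)$ is pinned down, so the only thing requiring any care is the parity bookkeeping in the last step.
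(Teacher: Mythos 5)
Your proposal is correct and follows exactly the paper's route: the paper derives this lemma as an immediate consequence of Lemma~\ref{rank of tree}, which is precisely what you do, with the value $\beta(P_n)=\lfloor n/2\rfloor$ and the parity check filled in explicitly.
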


\begin{lemma}\cite{hou1}\cite{shader}\label{path and cycle}
Let $C_n^\sigma$ be an oriented cycle of order $n$. Then
$$sr(C_n^\sigma)=\left\{\begin{array}{cc}
n,&\mbox{$C_n^\sigma$ is oddly-oriented,}\\
n-2,&\mbox{$C_n^\sigma$ is evenly-oriented,}\\
n-1,&\mbox{otherwise.}\end{array}\right.$$
\end{lemma}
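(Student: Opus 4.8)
The plan is to combine an exact evaluation of $\det S(C_n^\sigma)$ with the fact, immediate from $S(C_n^\sigma)$ being skew-symmetric, that $sr(C_n^\sigma)=n$ when this determinant is nonzero while $sr(C_n^\sigma)\le n-1$ (indeed $\le n-2$ when $n$ is even, since the skew-rank is even) when it vanishes. The matching lower bounds come essentially for free from the monotonicity of the skew-rank under induced subgraphs: deleting one vertex of $C_n^\sigma$ leaves an induced oriented path $P_{n-1}^\sigma$, and deleting two adjacent vertices leaves $P_{n-2}^\sigma$, whose skew-ranks are given by Lemma~\ref{rank of path}. So once the determinant is understood in each of the three cases, the value of $sr(C_n^\sigma)$ is pinned down by squeezing it between these two bounds, via Lemma~\ref{basic lemma}(i).

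First I would treat the ``otherwise'' case, which is precisely $n$ odd (an odd cycle is neither evenly- nor oddly-oriented, those notions being defined only for even cycles). Then $S(C_n^\sigma)$ is skew-symmetric of odd order, hence singular, so $sr(C_n^\sigma)\le n-1$; on the other hand $C_n^\sigma$ contains the induced path $P_{n-1}^\sigma$ with $n-1$ even, so $sr(C_n^\sigma)\ge sr(P_{n-1}^\sigma)=n-1$ by Lemmas~\ref{basic lemma}(i) and~\ref{rank of path}. Hence $sr(C_n^\sigma)=n-1$.

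Now let $n$ be even, write $C_n^\sigma=v_1v_2\cdots v_nv_1$ with $s_i=s_{v_iv_{i+1}}\in\{1,-1\}$ (indices mod $n$), so that $sgn(C_n^\sigma)$ is the sign of $\prod_{i=1}^n s_i$. Since $n$ is even, $\det S(C_n^\sigma)=\big(\mathrm{Pf}\,S(C_n^\sigma)\big)^2$, and the Pfaffian is a signed sum over the perfect matchings of $C_n$, of which there are exactly two: $M_1=\{v_1v_2,v_3v_4,\dots,v_{n-1}v_n\}$ and $M_2=\{v_2v_3,v_4v_5,\dots,v_{n-2}v_{n-1},v_nv_1\}$. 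A direct bookkeeping of the permutation signs shows $\mathrm{Pf}\,S(C_n^\sigma)=\pm\big(\prod_{i\ \mathrm{odd}}s_i-\prod_{i\ \mathrm{even}}s_i\big)$; since each product is $\pm1$ and their product equals $\prod_{i=1}^n s_i$, the Pfaffian vanishes exactly when $\prod_{i=1}^n s_i=1$, i.e.\ exactly when $C_n^\sigma$ is evenly-oriented. Hence if $C_n^\sigma$ is oddly-oriented then $\det S(C_n^\sigma)\ne 0$ and $sr(C_n^\sigma)=n$; and if $C_n^\sigma$ is evenly-oriented then $\det S(C_n^\sigma)=0$, so $sr(C_n^\sigma)\le n-2$, while deleting two adjacent vertices leaves the induced path $P_{n-2}^\sigma$ with $n-2$ even, giving $sr(C_n^\sigma)\ge sr(P_{n-2}^\sigma)=n-2$ and thus $sr(C_n^\sigma)=n-2$. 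The main obstacle is obtaining the relative sign of the two matchings in the Pfaffian correctly --- the subtlety being the wrap-around edge $v_nv_1$, which contributes a sign to both the relevant matrix entry and the associated permutation --- and this can alternatively be avoided by invoking the Sachs-type expansion of $\det S(G^\sigma)$ over spanning basic oriented subgraphs (here the two perfect matchings together with the even cycle $C_n$ itself) from the references cited in the introduction.
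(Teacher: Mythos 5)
The paper does not prove this lemma at all --- it is imported from the cited references --- so there is no internal proof to compare against; what matters is whether your self-contained argument is sound, and it is. The odd case is handled correctly: a skew-symmetric matrix of odd order is singular, and the induced $P_{n-1}^\sigma$ together with Lemmas~\ref{basic lemma}(i) and~\ref{rank of path} squeezes the rank to $n-1$. For even $n$ your key sign computation checks out: with the canonical arrangement, the matching $\{v_1v_2,\dots,v_{n-1}v_n\}$ contributes $+\prod_{i\,\mathrm{odd}}s_i$ to the Pfaffian, while $\{v_2v_3,\dots,v_nv_1\}$ corresponds to the $n$-cycle permutation of sign $(-1)^{n-1}=-1$ and contributes $-\prod_{i\,\mathrm{even}}s_i$; since both products are $\pm1$, their difference vanishes exactly when $\prod_{i=1}^{n}s_i=+1$, i.e.\ exactly in the evenly-oriented case, so $\det S(C_n^\sigma)=\bigl(\mathrm{Pf}\,S(C_n^\sigma)\bigr)^2$ equals $4$ or $0$ accordingly. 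The parity observation that forces $sr\le n-2$ (rather than merely $n-1$) in the singular even case, combined with the induced $P_{n-2}^\sigma$, then closes the argument. Your suggested alternative via the Sachs-type expansion is essentially Lemma~\ref{basic subgraph} of this paper specialized to $a_n$: the two perfect matchings contribute $2$ and the spanning cycle contributes $\mp 2$, giving the same dichotomy without any bookkeeping of permutation signs; either route is acceptable, and the latter is closer in spirit to how the cited sources and Section~4 of this paper operate.
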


\begin{lemma}\label{deleting pendent vertex}
Let $G^\sigma$ be an oriented graph containing a pendant vertex $v$
with the unique neighbor $u$. Then
$sr(G^\sigma)=sr(G^\sigma-u-v)+2$.
\end{lemma}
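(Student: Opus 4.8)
The plan is to write $S(G^\sigma)$ in a block form adapted to the pendant edge $uv$ and then read off its rank from a Schur complement. Order the vertices of $G^\sigma$ so that $v$ is first, $u$ is second, and the remaining $n-2$ vertices (those of $H^\sigma:=G^\sigma-u-v$) come last. Let $\alpha=s_{vu}\in\{1,-1\}$ record the orientation of the edge $uv$, and let $\mathbf b$ be the column vector with entries $s_{uw}$, $w\in V(H^\sigma)$. The structural point is that, since $v$ is pendant with unique neighbour $u$, the row and the column of $v$ in $S(G^\sigma)$ are zero except for the entries $s_{vu}=\alpha$ and $s_{uv}=-\alpha$; hence
\[
S(G^\sigma)=\begin{pmatrix} M & N \\ -N^{\top} & S(H^\sigma)\end{pmatrix},\qquad
M=\begin{pmatrix}0&\alpha\\-\alpha&0\end{pmatrix},\qquad N=e_2\,\mathbf b^{\top},
\]
where $e_2=(0,1)^{\top}$, so that $N$ is $2\times(n-2)$ with zero first row.

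Next I would use that $M$ is invertible. Since $\alpha^2=1$ we have $M^2=-I$, so $\det M=1$ and $M^{-1}=-M=\begin{pmatrix}0&-\alpha\\\alpha&0\end{pmatrix}$; in particular $(M^{-1})_{22}=0$. The Schur-complement formula for the rank of a block matrix with invertible leading block gives
\[
sr(G^\sigma)=\operatorname{rank}S(G^\sigma)=\operatorname{rank}M+\operatorname{rank}\!\bigl(S(H^\sigma)+N^{\top}M^{-1}N\bigr)=2+\operatorname{rank}\!\bigl(S(H^\sigma)+N^{\top}M^{-1}N\bigr).
\]
Here the pendant hypothesis is used a second time: because $N=e_2\mathbf b^{\top}$,
\[
N^{\top}M^{-1}N=\mathbf b\,\bigl(e_2^{\top}M^{-1}e_2\bigr)\,\mathbf b^{\top}=(M^{-1})_{22}\,\mathbf b\mathbf b^{\top}=0 .
\]
Therefore $sr(G^\sigma)=2+\operatorname{rank}S(H^\sigma)=sr(G^\sigma-u-v)+2$, as claimed; the degenerate case $G^\sigma=K_2$ (where $H^\sigma$ is empty) is covered as well.

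This argument is elementary, and there is no real obstacle: the only things that need care are the sign bookkeeping in the skew-symmetric block decomposition (getting $s_{uv}=-s_{vu}$ right) and the identity $\alpha^{-1}=\alpha$, after which the vanishing of the correction term $N^{\top}M^{-1}N$ is forced by the single zero row of $N$. One could equally well prove the statement by pivoting directly on the entry $s_{vu}=\alpha$: adding suitable multiples of the row (resp.\ column) of $v$ to the other rows (resp.\ columns) clears every off-diagonal entry in the row and column of $u$, and since the row and column of $v$ are otherwise zero these operations do not interact, so $S(G^\sigma)$ is transformed by invertible operations — indeed by a congruence $P^{\top}S(G^\sigma)P$, which preserves skew-symmetry — into the block-diagonal matrix with blocks $M$ and $S(H^\sigma)$, whence the rank count. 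Either way, the block/Schur-complement formulation is simply the tidiest; and we note that this is the skew-symmetric analogue of the familiar fact that deleting a pendant vertex together with its neighbour decreases the rank of the ordinary adjacency matrix by exactly $2$.
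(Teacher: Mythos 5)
Your proof is correct, and it is essentially the paper's argument: the paper pivots on the single nonzero entry $s_{vu}$ of the pendant row to clear the row and column of $u$, arriving at the block-diagonal matrix $\mathrm{diag}(M,S(G^\sigma-u-v))$, which is exactly your Schur-complement computation (the vanishing of $N^{\top}M^{-1}N$ being the formal counterpart of the paper's observation that these operations leave the remaining block untouched). No substantive difference in approach.
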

\begin{proof}
Assume that all vertices in $V(G^\sigma)$ are indexed by
$\{v_1,v_2,\cdots,v_n\}$ with $v_1=v$, $v_2=u$. Then the
skew-adjacency matrix can be expressed as
$$S(G^\sigma)
=\left(\begin{array}{ccccc}0&s_{12}&0&\cdots&0\\s_{21}&0&s_{23}&\cdots&s_{2n}\\0&s_{32}&0&\cdots&s_{3n}\\
\vdots&\vdots&\vdots&\ddots&\vdots\\0&s_{n2}&s_{n3}&\cdots&0\end{array}\right),
$$
where the first two rows and columns are labeled by $v_1$, $v_2$. So
it follows that
\begin{eqnarray*}
sr(G^\sigma)&=&r\left(\begin{array}{ccccc}0&s_{12}&0&\cdots&0\\s_{21}&0&0&\cdots&0\\0&0&0&\cdots&s_{3n}\\
\vdots&\vdots&\vdots&\ddots&\vdots\\0&0&s_{n3}&\cdots&0\end{array}\right)\\
&=&r\left(\begin{array}{cc}0&s_{12}\\s_{21}&0\end{array}\right)+r\left(\begin{array}{ccc}0&\cdots&s_{3n}\\
\vdots&\ddots&\vdots\\s_{n3}&\cdots&0\end{array}\right)\\
&=&r\left(\begin{array}{cc}0&s_{12}\\s_{21}&0\end{array}\right)+sr(G^\sigma-v_1-v_2)\\
&=&2+sr(G^\sigma-u-v).
\end{eqnarray*}
\end{proof}

\begin{remark}
In fact the result also holds for the unoriented graph, one can
refer to Corollary 1 (pp.234) \cite{cvetkovic}.
\end{remark}

 For convenience, the transformation in Lemma \ref{deleting
pendent vertex} is called $\delta-${\it transformation}. The
skew-rank of some graph can be derived by finite steps of
$\delta-$transformation.

Let $w$ be a common neighbor of two nonadjacent vertices $u$, $v$.
The edges among $u$, $v$ and $w$ have the {\it uniform orientations}
if the arcs is from $u$, $v$ to $w$ or from $w$ to $u$, $v$. The
edges among $u$, $v$ and $w$ have the {\it opposite orientations} if
one arc is from $u$ ($v$) to $w$ and the another is from $w$ to $v$
($u$).

Two nonadjacent vertices $u$, $v$ of an oriented graph $G^\sigma$
are called {\it uniform (opposite) twins} if $N(u)=N(v)$ and the
corresponding edges among $u$, $v$ and each neighbor have the
uniform (opposite) orientations.

 \begin{figure}[ht]
\center
\includegraphics [width = 8cm]{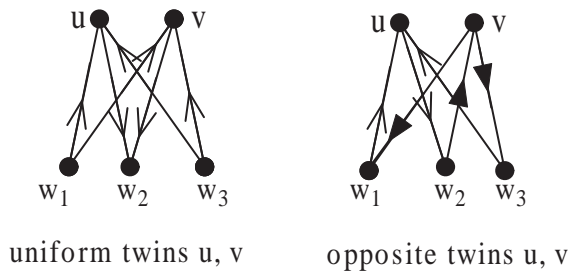}
\caption { \textit{Uniform twins $u,v$ in the left figure, but
opposite twins in the right figure.} }
 \end{figure}

 \begin{example}
Two graphs shown in Fig. 1 contain uniform, opposite twins. $u,v$
are uniform twins in the left graph and opposite twins in the right
graph.
 \end{example}

For an oriented graph $G^\sigma$, the uniform (opposite) twins in
$S(G^\sigma)$ correspond the identical (opposite) rows and columns.
Hence deleting or adding a uniform (opposite) twin vertex does not
change the skew-rank of an oriented graph. Hence we have

\begin{lemma}\label{uniform opposite twins}
Let $u$, $v$ be uniform (opposite) twins of an oriented graph
$G^\sigma$. Then $sr(G^\sigma)=sr(G^\sigma-u)=sr(G^\sigma-v)$.
\end{lemma}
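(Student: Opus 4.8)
The plan is to translate the combinatorial hypothesis into a statement about the rows and columns of the skew-adjacency matrix $S(G^\sigma)$, and then invoke the elementary fact that deleting a row and column that is a scalar multiple of another row and column leaves the rank unchanged. So first I would fix an indexing $v_1,v_2,\dots,v_n$ of $V(G^\sigma)$ with $v_1=u$ and $v_2=v$, and write out what the entries $s_{1j}$ and $s_{2j}$ look like for $j\geq 3$. Since $u$ and $v$ are nonadjacent we have $s_{12}=s_{21}=0$, so the diagonal $2\times 2$ block indexed by $u,v$ is zero.

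Next I would examine, for each common neighbor $w=v_j$ with $j\geq 3$, the two entries $s_{1j}$ (the arc between $u$ and $w$) and $s_{2j}$ (the arc between $v$ and $w$). If $u,v$ are uniform twins, the edges $uw$ and $vw$ have uniform orientation, which by the definition given just before the lemma means both arcs point the same way relative to $w$ (both into $w$ or both out of $w$); hence $s_{1j}=s_{2j}$. For any non-neighbor $v_j$ of $u$ (equivalently of $v$, since $N(u)=N(v)$) we have $s_{1j}=s_{2j}=0$. Therefore row $1$ and row $2$ of $S(G^\sigma)$ are identical, and by skew-symmetry column $1$ and column $2$ are identical as well. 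In the opposite-twin case the definition forces one of $uw,vw$ to point into $w$ and the other out of $w$, so $s_{1j}=-s_{2j}$ for every $j$, and row $1$ equals $-1$ times row $2$ (again with the matching relation on columns). In either case, row/column $1$ is a nonzero scalar multiple ($\pm 1$) of row/column $2$.

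Then I would finish by a standard linear-algebra step: deleting $v_1=u$ amounts to a rank-preserving operation. Concretely, one can subtract (respectively add) row $2$ from row $1$ and column $2$ from column $1$ to zero out the entire first row and column without changing the rank; the resulting matrix is the direct sum of a $1\times 1$ zero block and $S(G^\sigma-u)$, so $sr(G^\sigma)=sr(G^\sigma-u)$. By the symmetry of the hypothesis in $u$ and $v$, the same argument gives $sr(G^\sigma)=sr(G^\sigma-v)$, completing the proof. This is exactly the "identical (opposite) rows and columns" remark made in the paragraph preceding the lemma, so the write-up can be quite short.

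I do not anticipate a genuine obstacle here; the only point requiring a little care is making sure the stated definition of "uniform orientation" (arcs from $u,v$ to $w$, or from $w$ to $u,v$) really does yield $s_{1j}=s_{2j}$ under the sign convention $s_{ij}=1$ for an arc $v_i\to v_j$ — one must check both sub-cases of the definition and confirm the signs agree, and likewise that "opposite orientation" yields $s_{1j}=-s_{2j}$. Once the sign bookkeeping is pinned down, the rank claim is immediate from the block decomposition already used in the proof of Lemma~\ref{deleting pendent vertex}.
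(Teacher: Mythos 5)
Your proposal is correct and follows essentially the same route as the paper: the paper offers no formal proof, merely the observation in the preceding paragraph that uniform (opposite) twins give identical (opposite) rows and columns of $S(G^\sigma)$, whence deleting one twin preserves the rank. Your write-up simply makes that observation precise, including the sign check on $s_{1j}$ versus $s_{2j}$ and the row/column elimination step, all of which is sound.
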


Two pendant vertices are called {\it pendant twins} in $G^\sigma$ if
they have the same neighbor in $G^\sigma$. By Lemma \ref{uniform
opposite twins}, we have
\begin{lemma}
Let $u,v$ be pendant twins of an oriented graph $G^\sigma$. Then
$sr(G^\sigma)=sr(G^\sigma-u)=sr(G^\sigma-v)$.
\end{lemma}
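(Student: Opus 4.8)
The plan is to observe that the notion of \emph{pendant twins} is just a special case of \emph{uniform twins} or \emph{opposite twins}, so that Lemma~\ref{uniform opposite twins} applies directly. First I would unwind the definitions: if $u$ and $v$ are pendant twins, then each is a pendant vertex and they share their unique neighbour, say $w$. In particular $u$ and $v$ are nonadjacent (each is adjacent only to $w$) and $N(u)=\{w\}=N(v)$, so the first half of the hypothesis of the twins definition is satisfied with $w$ as the only common neighbour.

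Next I would check the orientation condition. Since $w$ is the \emph{only} common neighbour of $u$ and $v$, the requirement that "the corresponding edges among $u$, $v$ and each neighbour have the uniform (resp.\ opposite) orientations" reduces to a single statement about the two arcs joining $\{u,v\}$ to $w$. There are exactly two possibilities: either both arcs point toward $w$ or both point away from $w$ (uniform orientation, so $u,v$ are uniform twins), or one arc points from $u$ to $w$ and the other from $w$ to $v$, or vice versa (opposite orientation, so $u,v$ are opposite twins). In every case $u$ and $v$ form either a pair of uniform twins or a pair of opposite twins in $G^\sigma$.

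Finally I would invoke Lemma~\ref{uniform opposite twins}: deleting one vertex of a uniform (or opposite) twin pair does not change the skew-rank, hence $sr(G^\sigma)=sr(G^\sigma-u)$ and, by symmetry, $sr(G^\sigma)=sr(G^\sigma-v)$. I do not expect any genuine obstacle here; the only point requiring a word of care is the case distinction on the orientations of the two arcs at $w$, which is what translates "pendant twins" into the uniform/opposite dichotomy needed to apply the previous lemma.
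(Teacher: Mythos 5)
Your proposal is correct and follows exactly the paper's route: the paper derives this lemma directly from Lemma~\ref{uniform opposite twins}, treating pendant twins as a special case of uniform or opposite twins, which is precisely the reduction you spell out. Your explicit case analysis on the orientations of the two arcs at the common neighbour is a welcome elaboration of a step the paper leaves implicit, but it is the same argument.
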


By the definitions of uniform (opposite) twins and evenly-oriented
graph, we can derive the following results.
\begin{lemma}\label{4vertex cycle and twins}
Let $G^\sigma$ be an oriented complete multipartite graph. If all
its $4$-vertex cycles are evenly-oriented, then all vertices in the
same vertex partite set are uniform or opposite twins.
\end{lemma}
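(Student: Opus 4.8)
The plan is to fix a complete multipartite orientation $G^\sigma$ in which every $4$-vertex cycle is evenly-oriented, fix one vertex partite set $V_\ell$, take two distinct vertices $u,v\in V_\ell$, and show they are uniform or opposite twins. Since $G$ is complete multipartite and $u,v$ lie in the same part, they are nonadjacent and $N(u)=N(v)=V(G^\sigma)\setminus V_\ell$, so the only thing to verify is the sign condition: for every neighbor $w$, the edges among $u,v,w$ have uniform orientation, or for every neighbor $w$ they have opposite orientation. The key observation is that if $u,v$ have two common neighbors $w_1,w_2$, then $w_1,w_2$ are adjacent (they lie in different parts, or in the same part distinct from $V_\ell$ — wait, they could be in the same part; I will need to handle that) so that $u w_1 v w_2 u$ need not be a $4$-cycle. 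Hence the correct pivot is: for adjacent common neighbors $w_1,w_2$ the four vertices $u,w_1,v,w_2$ do induce a cycle only if $w_1\not\sim w_2$ is false; instead $C\colon u w_1 v w_2 u$ is a $4$-cycle precisely when $w_1\not\sim w_2$. So I should first reduce to controlling neighbors that are pairwise nonadjacent, and separately handle a pair of adjacent neighbors by routing through a third vertex.

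The main steps I would carry out are as follows. First, define for each common neighbor $w$ the local sign $\varepsilon(w)\in\{+,-\}$ recording whether the edges $uw$ and $vw$ point "the same way" relative to $w$ (both into $w$ or both out of $w$ gives one value; one in, one out gives the other). Concretely, $\varepsilon(w)=s_{uw}s_{vw}$ in the notation of the skew-adjacency matrix, with the convention $s_{uw}=-s_{wu}$; then "uniform at $w$" means $\varepsilon(w)=+1$ and "opposite at $w$" means $\varepsilon(w)=-1$. The claim to prove is that $\varepsilon$ is constant on $N(u)$. Second, for two common neighbors $w_1,w_2$ that are \emph{not} adjacent in $G$, the sequence $u w_1 v w_2 u$ is an induced $4$-cycle in $G^\sigma$ (its underlying graph is $C_4$ since $u\not\sim v$ and $w_1\not\sim w_2$ and the other four pairs are edges), and its sign is $s_{uw_1}s_{w_1v}s_{vw_2}s_{w_2u} = (s_{uw_1}s_{vw_1})(s_{uw_2}s_{vw_2}) = \varepsilon(w_1)\varepsilon(w_2)$ after using skew-symmetry to flip the appropriate factors. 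By hypothesis this $4$-cycle is evenly-oriented, so $\varepsilon(w_1)\varepsilon(w_2)=+1$, i.e. $\varepsilon(w_1)=\varepsilon(w_2)$. This already proves the lemma whenever $N(u)$ is an independent set, i.e. when the rest of the graph is one part — but in the general multipartite case $N(u)$ splits into several parts, and $\varepsilon$ is so far only forced to be constant \emph{within} each part.

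Third, and this is the step I expect to be the main obstacle, I must link the values of $\varepsilon$ across different parts, i.e. compare $\varepsilon(w_1)$ and $\varepsilon(w_2)$ when $w_1\sim w_2$. Here $u w_1 v w_2 u$ is \emph{not} a $4$-cycle, so I instead pick an auxiliary vertex $x\in N(u)\cap N(v)$ that is nonadjacent to both $w_1$ and $w_2$ — such an $x$ exists precisely when there is a fourth part, i.e. when $G$ has at least four parts or enough vertices; if no such $x$ exists one is in the small-case regime of $K_2$, a triangle-free situation, or $G$ a complete bipartite/tripartite graph with parts too small, which I would dispatch by hand or by observing the statement is vacuous there. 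Given such an $x$, Step 2 applied to the nonadjacent pairs $\{w_1,x\}$ and $\{w_2,x\}$ gives $\varepsilon(w_1)=\varepsilon(x)=\varepsilon(w_2)$. Chaining these equalities over all pairs shows $\varepsilon$ is constant on $N(u)$, which is exactly the assertion that $u$ and $v$ are uniform twins (if the common value is $+1$) or opposite twins (if it is $-1$). Finally I would note that if $|V_\ell|\ge 2$ but $G$ is, say, complete bipartite $K_{2,m}$, then $N(u)$ is an independent set and Step 2 alone finishes it, so the only genuinely delicate case is three or more parts, handled by the auxiliary-vertex argument above; and $\delta$-transformation / Lemma~\ref{uniform opposite twins} are not needed for the proof, only the definition of evenly-oriented and skew-symmetry of $S(G^\sigma)$.
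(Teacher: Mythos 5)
Your Steps 1 and 2 are correct: setting $\varepsilon(w)=s_{uw}s_{vw}$ and computing the sign of $uw_1vw_2u$ as $\varepsilon(w_1)\varepsilon(w_2)$ is exactly the right reduction, and it shows $\varepsilon$ is constant on each part of $N(u)=N(v)$. But Step 3, where you compare $\varepsilon(w_1)$ and $\varepsilon(w_2)$ for $w_1,w_2$ in \emph{different} parts, contains a genuine error. You propose to pick an auxiliary common neighbor $x$ nonadjacent to both $w_1$ and $w_2$. In a complete multipartite graph nonadjacency is the same as lying in a common part, so $x$ would have to lie in the part of $w_1$ and in the part of $w_2$ simultaneously; since these parts are disjoint, no such $x$ ever exists --- not "precisely when there is a fourth part." The cases you defer ("dispatch by hand or vacuous") are in fact the main cases of interest: the lemma is applied in Theorem \ref{skewrank 2} to complete tripartite graphs, where every pair of common neighbors of $u,v$ from distinct parts is adjacent. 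So as written the argument only proves that $\varepsilon$ is constant within each part, not on all of $N(u)$, and does not establish the twin property.

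The repair is simpler than your Step 3: the hypothesis should be read as covering \emph{all} cycles on four vertices, not only induced ones. The walk $uw_1vw_2u$ is a $4$-vertex cycle of $G$ whether or not $w_1\sim w_2$ (the possible chord $w_1w_2$ is irrelevant to its being a cycle subgraph), and the paper uses the phrase in exactly this sense --- in the proof of Theorem \ref{three four vertices} the "subgraph $C_4^\sigma$ with vertex set $\{v_1,v_2,v_3,v_4\}$" of $K_{1,1,2}^\sigma$ is a non-induced $4$-cycle. Indeed, under the induced-only reading the lemma would be false already for $K_{1,1,2}$, which has no induced $C_4$ at all but whose two nonadjacent vertices fail to be uniform or opposite twins when the non-induced $4$-cycle is oddly-oriented. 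With the correct reading, your Step 2 computation applies verbatim to every pair $w_1,w_2\in N(u)$ and finishes the proof; Step 3 can be deleted. (For comparison: the paper states this lemma without proof, remarking only that it follows from the definitions, so your Steps 1--2 supply essentially the intended argument once the reading of "$4$-vertex cycle" is fixed.)
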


\section{Oriented graphs with small skew-rank}

 According to Lemmas \ref{basic lemma} and \ref{rank of path}, it is obvious that $sr(G^\sigma)\geq 2$
 if $G$ is a simple non-empty graph. A natural problem is to characterize the extremal connected oriented graphs
 whose skew-ranks attain the lower bound 2 and the second lower bound
 4.

 \begin{figure}[ht]
\center
\includegraphics [width = 8cm]{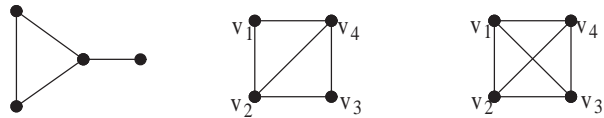}
\caption { \textit{Three graphs $G_1$, $K_{1,1,2}$ and $K_4$ } }
 \end{figure}

 Let $G_1$ be the
graph obtained from $K_3$ by adding a pendant edge to some vertex in
$K_3$ (as depicted in Fig. 2). Let $G^\sigma$ be an oriented graph.
Let $v$ be a vertex of $G^\sigma$ and $V^\prime \subset
V(G^\sigma)$. The notation $N(v)$ represents the neighborhood of $v$
in $G^\sigma$. $G^\sigma[V^\prime]$ denotes the induced subgraph of
$G^\sigma$ on the vertices in $V^\prime$ including the orientations
of edges.

\begin{theorem}\label{three four vertices}
Let $G^\sigma$ be a connected oriented graph of order n $(n=2,3,4)$
with skew-rank 2. Then the following statements hold:
\begin{enumerate}
\item If $n=2$, $G^\sigma$ is an oriented path $P_2^\sigma$ with
arbitrary orientation.
\item If $n=3$, then $G^\sigma$ is $K^\sigma_3$ or $P_3^\sigma$. Each edge has any
orientation in $G^\sigma$.
\item If $n=4$, then $G^\sigma$ is one of the following oriented
graphs with some properties:
\begin{enumerate}
\item Evenly-oriented cycle $C_4^\sigma$.
\item $K^\sigma_{1,3}$ and each
edge has any orientation.
\item Evenly-oriented graph $K_{1,1,2}^\sigma$.
\end{enumerate}
\end{enumerate}
\end{theorem}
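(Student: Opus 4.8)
The plan is to settle the three cases $n=2,3,4$ by enumerating the (very few) connected graphs on that many vertices and, for each, pinning down the skew-rank as a function of the orientation using the lemmas of Section~2; the list in the statement then reads off as exactly the collection of cases for which that value equals $2$.

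For $n=2$ the only connected graph is $P_2$, and $sr(P_2^\sigma)=2$ for every orientation by Lemma~\ref{rank of path} (or by inspection), which is (1). For $n=3$ the connected graphs are $P_3$ and $K_3$: Lemma~\ref{rank of path} gives $sr(P_3^\sigma)=2$, while for $K_3$ the skew-adjacency matrix is a $3\times3$ skew-symmetric matrix, hence singular, so its rank is at most $2$, and Lemma~\ref{basic lemma}(iii) together with the evenness of the skew-rank forces it to be exactly $2$. Every orientation is allowed in both graphs, which is (2).

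For $n=4$ there are precisely six connected graphs, $P_4$, $K_{1,3}$, $C_4$, $G_1$, $K_{1,1,2}$ and $K_4$, and I would treat them one at a time. By Lemma~\ref{rank of path}, $sr(P_4^\sigma)=4$, so $P_4$ is excluded. The graph $G_1$ has a pendant vertex, so a single $\delta$-transformation (Lemma~\ref{deleting pendent vertex}) reduces it to $P_2$ and gives $sr(G_1^\sigma)=sr(P_2^\sigma)+2=4$; excluded. For $K_4$ a direct expansion yields $\det S(K_4^\sigma)=(s_{12}s_{34}-s_{13}s_{24}+s_{14}s_{23})^2$, whose bracket is a sum of three terms each equal to $\pm1$, hence odd and nonzero, so $sr(K_4^\sigma)=4$; excluded. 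On the positive side, $K_{1,3}$ is a tree with matching number $1$, so $sr(K_{1,3}^\sigma)=2$ for every orientation by Lemma~\ref{rank of tree}, giving 3(b); and by Lemma~\ref{path and cycle}, $sr(C_4^\sigma)=2$ exactly when $C_4^\sigma$ is evenly-oriented (and $4$ otherwise), giving 3(a).

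The one slightly delicate case is $K_{1,1,2}$, with parts $\{a\}$, $\{b\}$, $\{c,d\}$: its unique $4$-cycle $a\,c\,b\,d\,a$ is \emph{not} an induced subgraph, since $ab$ is a chord, so Lemma~\ref{path and cycle} cannot be applied to it directly. Here I would compute the $4\times4$ determinant: because the $cd$-entry of $S(K_{1,1,2}^\sigma)$ is $0$, it collapses to $\det S(K_{1,1,2}^\sigma)=(s_{ad}s_{bc}-s_{ac}s_{bd})^2$, which vanishes iff $s_{ac}s_{ad}s_{bc}s_{bd}=1$, that is, iff this $4$-cycle, equivalently the whole graph $K_{1,1,2}^\sigma$, is evenly-oriented; hence $sr(K_{1,1,2}^\sigma)=2$ precisely in the evenly-oriented case, which is 3(c). (For the ``if'' direction one could instead invoke Lemma~\ref{4vertex cycle and twins} to make $c,d$ twins and then Lemma~\ref{uniform opposite twins} to delete one of them, reducing to $K_3^\sigma$.) Assembling the $n=4$ subcases reproduces exactly the list 3(a)--(c), completing the proof. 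The only real obstacle is this non-inducedness point for $K_{1,1,2}$, which forces a small determinant computation in place of a one-line appeal to the cycle lemma; every other step is immediate from Section~2.
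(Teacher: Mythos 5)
Your proposal is correct and follows essentially the same route as the paper: enumerate the six connected graphs on four vertices, exclude $P_4$, $G_1$ and $K_4$, and analyse $C_4$, $K_{1,3}$ and $K_{1,1,2}$ orientation by orientation. The only cosmetic difference is that you evaluate the $4\times 4$ determinants via the Pfaffian identity where the paper performs a row reduction; both reduce to the same parity argument for $K_4$ and the same evenly-oriented condition for $K_{1,1,2}$, and your remark that the $4$-cycle of $K_{1,1,2}$ is not induced (so the cycle lemma cannot be invoked directly) is a point the paper handles implicitly in the same way.
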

\begin{proof}
If $n=2,3$, the results can be easily verified from Lemmas \ref{path
and cycle} and \ref{rank of path}.

If $n=4$, then all $4$-vertex connected unoriented graphs are
$K_{1,3}$, $C_4$, $P_4$, $K_{1,1,2}$, $K_4$, $G_1$ (as depicted in
Fig. 2). By Lemmas \ref{rank of path} and \ref{deleting pendent
vertex} the oriented graphs with $P_4$ or $G_1$ as the underlying
graph have skew-rank 4. And $sr(C_4^\sigma)=4$ if $C_4^\sigma$ is an
oddly-oriented cycle from Lemma \ref{path and cycle}, but the value
is 2 if it is evenly-oriented cycle. If the underlying graph $G$ is
isomorphic to $K_{1,3}$, then $sr(G^\sigma)=2$ and each edge has any
orientation. Next we shall consider the skew-rank of oriented graphs
with $K_{1,1,2}$ or $K_4$ as their underlying graphs.

For convenience, all vertices of $K_{1,1,2}$ are labeled by
$\{v_1,v_2,v_3,v_4\}$ (as depicted in Fig. 2). Then the
skew-adjacency matrix of the oriented graph $K_{1,1,2}^\sigma$ can
be expressed as
$$S(K_{1,1,2}^\sigma)
=\left(\begin{array}{cccc}0&s_{12}&0&s_{14}\\-s_{12}&0&s_{23}&s_{24}\\0&-s_{23}&0&s_{34}\\
-s_{14}&-s_{24}&-s_{34}&0\end{array}\right).$$

Then $$sr(K_{1,1,2}^\sigma)=r\left(\begin{array}{cccc}0&s_{12}&0&0\\-s_{12}&0&0&0\\0&0&0&s_{34}+s_{23}\cdot\frac{s_{14}}{s_{12}}\\
0&0&-s_{34}-s_{23}\cdot\frac{s_{14}}{s_{12}}&0\end{array}\right).$$
So $sr(K_{1,1,2}^\sigma)=2$ if and only if
$s_{34}+s_{23}\cdot\frac{s_{14}}{s_{12}}=0$, i.e.,
$s_{12}s_{34}+s_{14}s_{23}=0$ which implies that the subgraph
$C_4^\sigma$ with vertex set $\{v_1,v_2,v_3,v_4\}$ of
$K_{1,1,2}^\sigma$ is evenly-oriented.

The skew-adjacency matrix of the oriented graph $K_4^\sigma$ can be
expressed as
$$S(K_4^\sigma)
=\left(\begin{array}{cccc}0&s_{12}&s_{13}&s_{14}\\-s_{12}&0&s_{23}&s_{24}\\-s_{13}&-s_{23}&0&s_{34}\\
-s_{14}&-s_{24}&-s_{34}&0\end{array}\right).$$   Then
$$sr(K_4^\sigma)=r\left(\begin{array}{cccc}0&s_{12}&0&0\\-s_{12}&0&0&0\\0&0&0&s_{34}+s_{23}\cdot\frac{s_{14}}{s_{12}}-s_{24}\cdot\frac{s_{13}}{s_{12}}\\
0&0&-s_{34}-s_{23}\cdot\frac{s_{14}}{s_{12}}+s_{24}\cdot\frac{s_{13}}{s_{12}}&0\end{array}\right).$$
Assume that
$s_{34}+s_{23}\cdot\frac{s_{14}}{s_{12}}-s_{24}\cdot\frac{s_{13}}{s_{12}}=0$.
It is equivalent to $s_{12}s_{34}+s_{14}s_{23}=s_{13}s_{24}$.
Obviously the value of the left side is 0, 2 or -2. But the value of
the right side is 1 or -1. So
$s_{34}+s_{23}\cdot\frac{s_{14}}{s_{12}}-s_{24}\cdot\frac{s_{13}}{s_{12}}\neq0$.
Therefore $sr(K_4^\sigma)=4$.
\end{proof}

Next we give a lemma which plays a key role in our proof of Theorem
\ref{skewrank 2}.
\begin{lemma}\label{complete graph} \cite{smith}
A connected graph is not a complete multipartite graph if and only
if it contains $P_4$, $G_1$ (as depicted in Fig. 2) or two copies of
$P_2$ as an induced subgraph.
\end{lemma}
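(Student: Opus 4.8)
The plan is to prove the equivalence in both directions, with the reverse (``only if'') implication carrying the real content. For the forward (``if'') direction I would first note two elementary facts: the class of complete multipartite graphs is closed under taking induced subgraphs (just restrict the partition), and none of $P_4$, $G_1$, $2P_2$ is itself complete multipartite. The second fact follows from the defining property that in a complete multipartite graph non-adjacency is transitive on the vertex set: in $P_4=v_1v_2v_3v_4$ the three non-edges $v_1v_3$, $v_1v_4$, $v_2v_4$ would force $v_1,v_2,v_3,v_4$ into one partite set despite the edge $v_1v_2$, and the analogous two-line check disposes of $G_1$ and of $2P_2$. Hence any graph having one of the three as an induced subgraph fails to be complete multipartite, and connectedness is not even needed here.

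For the converse I would use the standard reformulation that a graph is complete multipartite if and only if non-adjacency is an equivalence relation on its vertices, equivalently it has no induced copy of $\overline{P_3}$ (an edge together with an isolated vertex) --- the only thing to verify is transitivity, and a failure of transitivity is precisely an induced $\overline{P_3}$. So if the connected graph $G$ is not complete multipartite, there are vertices $a,b,c$ with $ab\in E(G)$ and $ac,bc\notin E(G)$. Now I would bring in connectedness: set $k=d_G(c,\{a,b\})\ge 2$ and fix a shortest $c$--$\{a,b\}$ path $c=x_0,x_1,\dots,x_k$, relabelling so that $x_k=a$; note $x_0,\dots,x_{k-1}\notin\{a,b\}$ and, crucially, that a geodesic is chordless. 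If $k\ge 3$ then $\{x_{k-3},x_{k-2},x_{k-1},a\}$ induces a $P_4$. If $k=2$, put $w=x_1$, so $aw,cw\in E(G)$ while $ac\notin E(G)$; then among $a,b,c,w$ the only adjacency not already pinned down is $bw$, and its presence yields the triangle $abw$ with the pendant $c$ (that is, $G_1$), while its absence yields the path $b$--$a$--$w$--$c$ (that is, $P_4$). Either way $G$ contains one of the prescribed subgraphs.

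The one place that needs care --- the main obstacle, such as it is --- is the bookkeeping in the $k=2$ case: one must confirm that $a,b,c,w$ really are four distinct vertices and that $ab$, $ac$, $bc$, $aw$, $cw$ are all determined by the choices already made, so that the single unknown adjacency $bw$ drives the dichotomy; the $k\ge3$ case then follows mechanically from chordlessness of geodesics. I would also remark in passing that this argument never actually produces an induced $2P_2$, so $2P_2$ is logically redundant in the list (it is kept because it is convenient to have available in later sections and obviously also obstructs complete multipartiteness); alternatively, one can deduce the lemma from the known structural facts that a connected $G_1$-free graph is triangle-free or complete multipartite and that a connected $\{P_4,K_3\}$-free graph is complete bipartite, but the direct geodesic argument above is shorter and self-contained.
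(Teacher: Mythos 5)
Your argument is correct. Note first that the paper itself offers no proof of this lemma --- it is quoted verbatim from Smith's 1970 paper with only a citation --- so there is nothing internal to compare against; your write-up is a legitimate self-contained proof. The forward direction (heredity of complete multipartiteness plus the check that none of $P_4$, $G_1$, $2P_2$ is complete multipartite) is routine and correctly handled. The converse is where the content lies, and your route through the reformulation ``complete multipartite $\Leftrightarrow$ no induced $K_2\cup K_1$'' is sound: a failure of transitivity of non-adjacency yields $a\sim b$, $c\not\sim a$, $c\not\sim b$, and the geodesic from $c$ to $\{a,b\}$ is chordless with no internal vertex in $\{a,b\}$, so the case split on $k=d(c,\{a,b\})$ goes through --- $k\ge 3$ gives an induced $P_4$ outright, and in the $k=2$ case the five adjacencies $ab$, $ac$, $bc$, $aw$, $cw$ are indeed all forced, leaving only $bw$ to decide between $G_1$ and $P_4$. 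Your side observation is also accurate: the argument never produces $2P_2$, so for \emph{connected} graphs that item is logically redundant in the ``only if'' direction (any induced $2P_2$ already contains an induced $K_2\cup K_1$ and hence forces an induced $P_4$ or $G_1$ via connectivity); it is harmless in the statement since the ``if'' direction still holds for it. The lemma as cited and as you prove it supports its only use in the paper, namely forcing $sr(G^\sigma)\ge 4$ in the necessity part of Theorem \ref{skewrank 2}.
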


\begin{theorem}\label{skewrank 2}
Let $G^\sigma$ be a connected oriented graph of order $n\geq 5$.
Then $sr(G^\sigma)=2$ if and only if the underlying graph of
$G^\sigma$ is a complete bipartite or tripartite graph and all
$4$-vertex cycles are evenly-oriented in $G^\sigma$.
\end{theorem}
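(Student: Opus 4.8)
The plan is to handle the two implications by different reductions. For sufficiency, suppose the underlying graph is complete bipartite or complete tripartite and every $4$-vertex cycle of $G^\sigma$ is evenly-oriented. Since $G$ is complete multipartite, Lemma~\ref{4vertex cycle and twins} guarantees that any two vertices of the same partite set are uniform or opposite twins; I would then repeatedly invoke Lemma~\ref{uniform opposite twins} to delete vertices one at a time until each partite set has a single vertex, noting that being complete multipartite and having all $4$-cycles evenly-oriented are both inherited by induced subgraphs, so Lemma~\ref{4vertex cycle and twins} applies again at each stage. The process terminates at $K_{1,1}\cong P_2^\sigma$ in the bipartite case and at $K_{1,1,1}\cong C_3^\sigma$ in the tripartite case, and both have skew-rank $2$ by Lemma~\ref{rank of path} and by the ``otherwise'' clause ($n-1=2$) of Lemma~\ref{path and cycle}. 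Hence $sr(G^\sigma)=2$.

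For necessity, assume $sr(G^\sigma)=2$ with $G$ connected and $n\ge 5$. The first step is to prove $G$ is complete multipartite. For every orientation one has $sr(P_4^\sigma)=4$ (Lemma~\ref{rank of path}), the disjoint union of two oriented edges has skew-rank $2+2=4$ (Lemma~\ref{basic lemma}(ii)), and $sr(G_1^\sigma)=sr(P_2^\sigma)+2=4$ by Lemma~\ref{deleting pendent vertex} applied at the pendant vertex of $G_1$; so by Lemma~\ref{basic lemma}(i) none of $P_4$, two copies of $P_2$, or $G_1$ occurs in $G^\sigma$ as an induced subgraph, and Lemma~\ref{complete graph} forces $G$ to be complete $r$-partite for some $r$. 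Since $sr(K_4^\sigma)=4$ for every orientation --- which is exactly the computation carried out in the proof of Theorem~\ref{three four vertices} --- $G^\sigma$ has no induced $K_4^\sigma$; a vertex from each of four distinct parts would span one, so $r\le 3$, while $r\ge 2$ because $G^\sigma$ has an edge (Lemma~\ref{basic lemma}(iii)). Thus the underlying graph is complete bipartite or complete tripartite.

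It remains to show that every $4$-vertex cycle $C$ of $G^\sigma$ is evenly-oriented. Let $H^\sigma=G^\sigma[V(C)]$. Because $G^\sigma$ has no induced $K_4^\sigma$, the cycle $C$ carries at most one chord, so $H$ is either $C_4$ or $K_{1,1,2}$ and $C$ is its unique $4$-cycle. If $C$ were oddly-oriented then $sr(H^\sigma)=4$: for $H=C_4$ this is Lemma~\ref{path and cycle}, and for $H=K_{1,1,2}$ it is the non-vanishing branch of the determinant computation in the proof of Theorem~\ref{three four vertices}, where $sr(K_{1,1,2}^\sigma)$ equals $2$ precisely when its $4$-cycle is evenly-oriented and equals $4$ otherwise. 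In either case Lemma~\ref{basic lemma}(i) gives $4=sr(H^\sigma)\le sr(G^\sigma)=2$, a contradiction, so $C$ is evenly-oriented. This completes necessity.

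The step I expect to be the main obstacle is this last one: correctly identifying which induced subgraphs a $4$-vertex cycle can span inside a complete multipartite graph --- in particular observing that the two-chord possibility is already excluded by $sr(K_4^\sigma)=4$ --- and then dealing with the cycles that meet three partite sets via the $K_{1,1,2}$ sub-case of Theorem~\ref{three four vertices}. The twin-contraction used for sufficiency is, by contrast, straightforward once Lemma~\ref{4vertex cycle and twins} is available.
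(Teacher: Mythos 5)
Your proof is correct and follows essentially the same route as the paper: sufficiency by twin-deletion via Lemmas~\ref{4vertex cycle and twins} and \ref{uniform opposite twins} down to $P_2^\sigma$ or $K_3^\sigma$, and necessity by combining Lemma~\ref{complete graph} with the computation $sr(K_4^\sigma)=4$ to force a complete bipartite or tripartite underlying graph. In fact your treatment of the tripartite case is slightly more careful than the paper's, which disposes of it with ``similarly'': you correctly observe that a $4$-cycle meeting three parts induces a $K_{1,1,2}$ rather than a $C_4$, and you handle it via the $K_{1,1,2}$ computation from the proof of Theorem~\ref{three four vertices}.
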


\begin{proof}
{\bf Sufficiency:}

Assume that $G^\sigma$ is a complete bipartite graph $K_{n_1,n_2}$
and all its $4$-vertex cycles are evenly-oriented. Then all vertices
in the same partite vertex set are uniform or opposite twins by
Lemma \ref{4vertex cycle and twins}. Let $X_1,X_2$ be two partite
vertex sets of $K_{n_1,n_2}$. Suppose that $n_1\geq2$. Let $x_1$,
$x_2$ be two arbitrary vertices in $X_1$. By Lemma \ref{uniform
opposite twins}, we have
$sr(K_{n_1,n_2}^\sigma)=sr(K_{n_1,n_2}^\sigma-x_1)=sr(K_{n_1,n_2}^\sigma-x_2)=sr(P_2^\sigma)=2$.

Similarly, $sr(K_{n_1,n_2,n_3}^\sigma)=sr(K_3^\sigma)=2$ if all
$4$-vertex cycles are evenly-oriented in $K_{n_1,n_2,n_3}^\sigma$.

{\bf Necessity:}

Suppose that the underlying graph of $G^\sigma$ is isomorphic to
$K_n$. Since $n\geq5$, $G^\sigma$ must contain $K_4^\sigma$ as an
induced subgraph. So $sr(G^\sigma)\geq sr(K_4^\sigma)=4$ from the
proof of Theorem \ref{three four vertices}. This is a contradiction.

Assume that the underlying graph $G$ is not a complete multipartite
graph. Then $G$ must contain $P_4$, $G_1$ (as depicted in Fig. 2) or
two copies of $P_2$ as an induced subgraph by Lemma \ref{complete
graph}. This implies that $sr(G^\sigma)\geq 4$ which is a
contradiction.

Combining the above discussion, we infer that $G$ is a complete
multipartite graph but not a complete graph. Assume that the
underlying graph $G$ is a complete $t$-partite graph
$K_{n_1,n_2,\cdots,n_t}$. Suppose that $t\geq 4$. Then $G^\sigma$
must contain an induced subgraph $K_4^\sigma$. From the proof of
Theorem \ref{three four vertices}, $sr(G^\sigma)\geq
sr(K_4^\sigma)=4$. So $t=2$ or $3$.

{\bf Case 1.} $t=2$.

Let $X_1$, $X_2$ be the two partite vertex sets of $K_{n_1,n_2}$. If
the cardinality of one of them is one, the $G^\sigma$ is an oriented
star $K_{1,n-1}^\sigma$ and each edge has arbitrary orientation.
Assume that the cardinality of every partite vertex set is more than
one. If $K_{n_1,n_2}^\sigma$ contains an oddly-oriented cycle
$C_4^\sigma$ as an induced subgraph, then
$sr(K_{n_1,n_2}^\sigma)\geq sr(C_4^\sigma)=4$. So all $4$-vertex
cycles in $K_{n_1,n_2}^\sigma$ are evenly-oriented.

{\bf Case 2.} $t=3$.

Similarly to the above discussion, we conclude that all $4$-vertex
cycles in $K_{n_1,n_2,n_3}^\sigma$ are evenly-oriented.
\end{proof}

\begin{theorem}\label{skewrank 4}
Let $G^\sigma$ be an oriented graph with pendant vertex of order
$n$. Then $sr(G^\sigma)=4$ if and only if $G^\sigma$ is one of the
following oriented graphs with some properties:
\begin{enumerate}
\item Graphs obtained by inserting some edges with arbitrary orientation between the center of
$S_{n-n_1-n_2}^\sigma$ $(n_1+n_2\geq 2)$ and some vertices (maybe
partial or all ) of a complete bipartite oriented graph
$K^\sigma_{n_1,n_2}$ such that all $4$-vertex cycles in
$K_{n_1,n_2}^\sigma$ are evenly-oriented.
\item Graphs obtained by inserting some edges with arbitrary orientation between the center of
$S_{n-n_1-n_2-n_3}^\sigma$ $(n_1+n_2+n_3\geq 3)$ and some vertices
(maybe partial or all) of a complete tripartite oriented graph
$K_{n_1,n_2,n_3}^\sigma$ such that all $4$-vertex cycles in
$K_{n_1,n_2,n_3}^\sigma$ are evenly-oriented.
\end{enumerate}
\end{theorem}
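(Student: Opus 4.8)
The plan is to exploit the $\delta$-transformation (Lemma~\ref{deleting pendent vertex}) as the organizing principle, reducing $sr(G^\sigma)=4$ to the already-classified case $sr=2$ from Theorem~\ref{skewrank 2}. For the \textbf{sufficiency} direction, suppose $G^\sigma$ is built as described: take $K_{n_1,n_2}^\sigma$ (or $K_{n_1,n_2,n_3}^\sigma$) with all $4$-cycles evenly-oriented, adjoin a star center $c$ joined to some subset of its vertices, with the remaining star leaves pendant at $c$. I would first strip off all genuine pendant leaves at $c$ except one; by Lemma~\ref{deleting pendent vertex} each such removal keeps track of the rank additively, but here I instead observe that pendant twins at $c$ can be deleted freely (Lemma on pendant twins), leaving a single pendant leaf $w$ at $c$. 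Then one $\delta$-transformation on the edge $cw$ removes both $c$ and $w$ and contributes exactly $2$ to the skew-rank, leaving precisely the induced complete multipartite graph (or a union of it with isolated vertices arising from vertices the center was \emph{not} joined to — these contribute $0$ by Lemma~\ref{basic lemma}(iii)). By Theorem~\ref{skewrank 2} that residual graph has skew-rank $2$, so $sr(G^\sigma)=2+2=4$.

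For the \textbf{necessity} direction, let $v$ be a pendant vertex of $G^\sigma$ with neighbor $u$. Apply the $\delta$-transformation: $sr(G^\sigma-u-v)=sr(G^\sigma)-2=2$. Now I would analyze the connected components of $G^\sigma-u-v$: all but one are edgeless (else the rank exceeds $2$ by Lemma~\ref{basic lemma}(ii)), and the single nontrivial component $H^\sigma$ has $sr(H^\sigma)=2$. If $H^\sigma$ has at least $5$ vertices, Theorem~\ref{skewrank 2} forces $H$ to be complete bipartite or tripartite with all $4$-cycles evenly-oriented; if $H^\sigma$ has at most $4$ vertices, Theorem~\ref{three four vertices} gives the finite list, and one checks each of those underlying graphs ($P_2$, $P_3$, $K_3$, $K_{1,3}$, evenly-oriented $C_4$, evenly-oriented $K_{1,1,2}$) is itself a complete bipartite or tripartite graph with evenly-oriented $4$-cycles — so in all cases $H$ is complete multipartite of the required type. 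It remains to reconstruct $G^\sigma$ from $H^\sigma$, the isolated vertices, and the pair $u,v$: the vertex $u$ is adjacent to $v$ and to some subset of $V(H^\sigma)\cup\{\text{isolated vertices}\}$, while $v$ is pendant, so $u$ plays exactly the role of the star center $c$ and $v$ together with the edgeless components are its pendant leaves. One must still argue that $u$ has \emph{no} neighbor outside $H^\sigma$ other than these (which follows because any such neighbor would survive in $G^\sigma-u-v$ with an edge, forcing it into $H^\sigma$), and that adding edges from $u$ into $H^\sigma$ in \emph{arbitrary} orientation does not raise the rank above $4$ — here one invokes that a single extra vertex can increase the rank of a skew-symmetric matrix by at most $2$, so $sr(G^\sigma)\le sr(H^\sigma)+2=4$, forcing equality and imposing no further constraint on the orientations of the edges at $u$.

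The step I expect to be the main obstacle is the careful bookkeeping in the necessity direction around \emph{which} vertices the center may be joined to and showing the description is both complete and non-redundant: in particular, ruling out the possibility that $u$ is adjacent to a vertex that itself carries further structure (forcing a second nontrivial component, contradicting $sr=2$), and verifying that orientations on the center's edges are genuinely unconstrained. A secondary subtlety is handling the degenerate sub-cases where $H^\sigma$ is small — one should not merely cite Theorem~\ref{skewrank 2} (which assumes $n\ge 5$) but separately fold in the $n\le 4$ classification and note those small graphs are also complete bipartite/tripartite, so that the statement of the theorem is uniform. The matrix-rank inequality ``one vertex changes the rank by at most $2$'' is the only genuinely new ingredient beyond the lemmas already proved, and it is elementary from skew-symmetry (the new row and its negative-transpose column together have rank at most $2$).
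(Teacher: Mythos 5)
Your proposal is correct and follows essentially the same route as the paper: apply the $\delta$-transformation at the pendant vertex, show that $G^\sigma-u-v$ has exactly one non-edgeless component (the paper does this via a slightly redundant two-case analysis that your one-line appeal to Lemma~\ref{basic lemma} replaces), identify that component via the skew-rank-$2$ classification, and reassemble $G^\sigma$ as a star center joined into it. Your explicit treatment of the case where the nontrivial component has at most $4$ vertices (via Theorem~\ref{three four vertices}, since Theorem~\ref{skewrank 2} assumes $n\geq 5$) is a point the paper silently glosses over, so your version is if anything slightly more complete.
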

\begin{proof}
{\bf Sufficiency:} It is easy to verify that the results hold by
Lemma \ref{deleting pendent vertex} and Theorem \ref{skewrank 2}.

{\bf Necessity:} Assume that $sr(G^\sigma)=4$. Let $x$ be a pendant
vertex in $G^\sigma$ and $N(x)=y$. Suppose that
$G^\sigma-x-y=G^\sigma_{11}\cup G^\sigma_{12}\cup \cdots \cup
G^\sigma_{1t}$ where $G^\sigma_{11}, G^\sigma_{12}, \cdots,
G^\sigma_{1t}$ are connected components of $G^\sigma-x-y$. If each
$G^\sigma_{1i}$ $(i=1,2,\cdots,t)$ is trivial, then $G^\sigma-x-y$
is an oriented star. So $sr(G^\sigma)=2$ which is a contradiction.
Next we shall verify that there exists exactly one nontrivial
connected components in $G^\sigma-x-y$.

Assume that there exist $i$ $(i\geq 2)$ nontrivial connected
components in $G^\sigma-x-y$. Without loss of generality, we denote
them by $G_{11},G_{12},\cdots,G_{1i}$.

{\bf Case 1.} Each of the nontrivial components has no pendant
vertex.

By Lemma \ref{deleting pendent vertex}, we have

\begin{eqnarray*}
sr(G^\sigma)&=&2+sr(G^\sigma-x-y)\\
&=& 2+\sum_{j=1}^isr(G^\sigma_{1j})\\
&\geq&2+\sum_{j=1}^i2\quad\quad \mbox{since $sr(G^\sigma_{1j})\geq 2$}\\
&=&2+2i\geq 6.
\end{eqnarray*}
This is a contradiction.

{\bf Case 2.} There exists one nontrivial component which contains a
pendant vertex.

Let $v$ be the pendant vertex in a nontrivial component and $u$ be
the neighbor of $v$. Then $sr(G^\sigma)=sr(G^\sigma-x-y-u-v)+4$. So
$G^\sigma-x-y-u-v$ is an empty graph. This is impossible since there
exist some edges in other components under our assumption.

Combining the above two cases, there exists exactly one nontrivial
connected component in $G^\sigma-x-y$. Without loss of generality,
assume that $G^\sigma_{11}$ is nontrivial. So
$G^\sigma-x-y=G^\sigma_{11}\cup (n-|G^\sigma_{11}|-2)K_1$. Hence
$sr(G^\sigma)=sr(G^\sigma_{11})+2\geq 4$ with the equality holding
if and only if $sr(G_{11}^\sigma)=2$. So $G_{11}^\sigma$ is one of
the graphs as described in Theorem \ref{skewrank 2}. It is evident
that the subgraph induced by $x,y$ and all isolated vertices in
$G^\sigma-x-y$ is an oriented star $S^\sigma_{n-|G^\sigma_{11}|}$.
Therefore $G^\sigma$ can be obtained by inserting some edges with
any orientation between the center of $S^\sigma_{n-|G^\sigma_{11}|}$
and some vertices (maybe partial or all) of $G^\sigma_{11}$.
\end{proof}

\begin{figure}[ht]
\center
\includegraphics [width = 12cm]{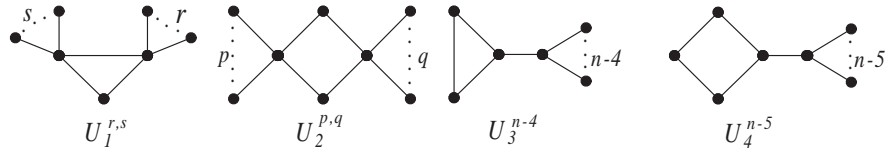}
\caption { \textit{ Four unoriented unicyclic graphs $U_1^{r,s}$,
$U_2^{p,q}$, $U_3^{n-4}$, $U_4^{n-5}$} }
 \end{figure}

By Lemma \ref{path and cycle} and Theorem \ref{skewrank 4}, we have
\begin{theorem}
Let $U^\sigma$ be an oriented unicyclic graph of order $n$ and
$C^\sigma$ be the oriented cycle in $U^\sigma$. Then
$sr(U^\sigma)=4$ if and only if $U^\sigma$ is one of the following
graphs with some properties:
\begin{enumerate}
\item The oddly-oriented cycle $C_4^\sigma$,or the evenly-oriented
cycle $C_6^\sigma$, or the oriented cycle $C_5$ with any
orientation.
\item The oriented graphs with $U_1^{r,s}$ ($r+s=n-3$), $U_2^{p,q}$ ($p+q=n-4$)
or $U_3^{n-4}$ (as depicted in Fig. 3) as the underlying graph and each edge has any
orientation in $U^\sigma$.
\item The oriented graphs with $U_4^{n-5}$ (as depicted in Fig. 3) as the
underlying graph in which $C_4^\sigma$ is an evenly-oriented cycle.
\end{enumerate}
\end{theorem}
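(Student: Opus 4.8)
The plan is to break the argument according to whether or not $U^\sigma$ has a vertex of degree one, since a connected unicyclic graph with minimum degree at least $2$ is exactly its cycle $C^\sigma$. In that first case Lemma \ref{path and cycle} applies verbatim: $sr(C_n^\sigma)$ equals $n$, $n-2$, or $n-1$ according as $C_n^\sigma$ is oddly-oriented, evenly-oriented, or of odd length, and setting each of these equal to $4$ leaves precisely the oddly-oriented $C_4^\sigma$, the evenly-oriented $C_6^\sigma$, and $C_5^\sigma$ with an arbitrary orientation. This yields item (1), and the sufficiency for (1) is the same computation read backwards.

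So assume $U^\sigma$ has a pendant vertex and $sr(U^\sigma)=4$. Then Theorem \ref{skewrank 4} says $U^\sigma$ is obtained from an oriented star $S^\sigma$ with centre $y$ (made up of a pendant vertex $x$, its neighbour $y$, and the isolated vertices of $U^\sigma-x-y$) by inserting some number $j$ of edges, with arbitrary orientations, from $y$ to distinct vertices of a nontrivial connected oriented graph $G_{11}^\sigma$ with $sr(G_{11}^\sigma)=2$; moreover $j\ge 1$ because $U^\sigma$ is connected, and by Theorems \ref{three four vertices} and \ref{skewrank 2} the graph $G_{11}^\sigma$ is a complete bipartite or tripartite oriented graph all of whose $4$-vertex cycles are evenly-oriented. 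The new ingredient is unicyclicity. Since $S^\sigma$ is a tree, the cyclomatic number of $U^\sigma$ (its number of independent cycles) equals the cyclomatic number of $G_{11}^\sigma$ plus $(j-1)$, and this must be $1$; hence either $G_{11}^\sigma$ is a tree and $j=2$, or $G_{11}^\sigma$ is unicyclic and $j=1$.

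If $G_{11}^\sigma$ is a tree, then by Lemma \ref{rank of tree} it has matching number $1$, i.e.\ it is a star $K_{1,m}^\sigma$, and the two inserted edges go from $y$ either to the centre and one leaf of $K_{1,m}$ (creating a triangle and leaving pendant vertices at two of its vertices, the graph $U_1^{r,s}$ with $r+s=n-3$) or to two leaves (creating a $4$-cycle with pendant vertices at two opposite vertices, the graph $U_2^{p,q}$ with $p+q=n-4$); in both cases every orientation is allowed, because $G_{11}^\sigma$ is a star and carries no orientation condition. If $G_{11}^\sigma$ is unicyclic, then among complete bipartite/tripartite graphs only $K_{1,1,1}=K_3$ and $K_{2,2}=C_4$ have cyclomatic number $1$, so $G_{11}^\sigma$ is $K_3^\sigma$ with any orientation or an evenly-oriented $C_4^\sigma$; attaching $y$ by one edge to a vertex of $K_3^\sigma$ gives $U_3^{n-4}$ with any orientation, and attaching it to a vertex of the evenly-oriented $C_4^\sigma$ gives $U_4^{n-5}$ in which the $4$-cycle is evenly-oriented. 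This is the only family where an orientation condition persists, precisely because there the cycle of $U^\sigma$ is $G_{11}^\sigma$ itself rather than a cycle newly formed by the inserted edges; and this exhausts items (2) and (3). For the converse one checks each listed graph directly: for (2) and (3) a single $\delta$-transformation (Lemma \ref{deleting pendent vertex}) deletes a pendant vertex together with its neighbour and reduces $U^\sigma$ to the corresponding $G_{11}^\sigma$ (a star, $K_3^\sigma$, or an evenly-oriented $C_4^\sigma$) together with isolated vertices, so $sr(U^\sigma)=sr(G_{11}^\sigma)+2=4$.

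I expect the main obstacle to be the bookkeeping in the pendant case rather than any hard inequality: one must check that the classification of complete multipartite oriented graphs with skew-rank $2$ and cyclomatic number at most $1$ really leaves only stars, $K_3$, and $C_4$, and then match each of the resulting constructions — together with its degenerate parameter values (for instance $r=0$, or the order-$4$ and order-$5$ cases where several of $U_1,U_2,U_3,U_4$ collapse onto $G_1$ or onto a $C_4$ with one pendant edge) — to the graphs drawn in Fig. 3 without omission or double counting. The rank computations themselves are immediate from Lemmas \ref{basic lemma}, \ref{rank of tree}, \ref{path and cycle}, and \ref{deleting pendent vertex}.
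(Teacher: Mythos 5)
Your proposal is correct and follows exactly the route the paper intends: the paper states this theorem without proof as a consequence of Lemma \ref{path and cycle} (for the pendant-free case, where $U^\sigma$ is its cycle) and Theorem \ref{skewrank 4} (for the pendant case), and your cyclomatic-number bookkeeping correctly supplies the omitted details, including why only stars, $K_3$ and $K_{2,2}$ survive among the rank-$2$ graphs $G_{11}^\sigma$ and why the orientation condition persists only for $U_4^{n-5}$. The degenerate overlaps you flag at the end (e.g.\ $U_4^{0}$ coinciding with $U_2^{1,0}$) are real but harmless, since the theorem is stated as a disjunctive list.
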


\begin{theorem}
Let $B^\sigma$ be an oriented bicyclic graph of order $n$ with
pendant vertices. Then $sr(B^\sigma)=4$ if and only if $B^\sigma$ is
one of the following graphs with some properties:
\begin{enumerate}
\item The oriented graphs with $B_1$, $B_2$ or $B_3$ (as depicted in Fig. 4) as the
underlying graph in which each edge has any orientation.
\item The oriented graphs with $B_4$ or $B_5$ (as depicted in Fig. 4) as the underlying graph in which
the subgraph induced by vertices $u_i$ $(i=1,2,3,4)$ is an
even-oriented cycle.
\item The oriented graphs with $B_6$ or $B_7$ (as depicted in Fig. 4) as the underlying graph such that all $4$-vertex cycles induced by
four vertices among $w_i$ $(i=1,2)$ and $v_j$ $(j=1,2,3)$ are
evenly-oriented.
\end{enumerate}
\end{theorem}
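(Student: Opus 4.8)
The plan is to obtain the statement from Theorem \ref{skewrank 4} together with a cyclomatic-number count. Since $B^\sigma$ is assumed to have a pendant vertex, Theorem \ref{skewrank 4} applies directly: $sr(B^\sigma)=4$ if and only if $B^\sigma$ is obtained from a connected oriented graph $G_{11}^\sigma$ with $sr(G_{11}^\sigma)=2$ by adjoining a new vertex $y$ (the centre of a star, carrying the pendant $x$ and the further leaves coming from the isolated vertices of $B^\sigma-x-y$) and joining $y$, with arbitrary orientation, to a nonempty set $T\subseteq V(G_{11})$; moreover, by Theorems \ref{three four vertices} and \ref{skewrank 2}, every such $G_{11}$ is a complete bipartite or complete tripartite graph all of whose $4$-vertex cycles are evenly-oriented. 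So the first step is simply to record this normal form, which also makes clear that only the cycles lying inside $G_{11}$ are constrained, the orientations of the edges incident with $y$ being irrelevant (this is exactly the $\delta$-transformation of Lemma \ref{deleting pendent vertex}).

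The second step is to translate ``bicyclic'' into arithmetic. The pendant $x$, the extra star-leaves at $y$, and the vertex $y$ itself lie on no cycle, so the cyclomatic number of $B$ equals that of $G_{11}$ with $y$ joined to $T$, namely
$$\mu(B)=|E(G_{11})|-|V(G_{11})|+|T|.$$
Imposing $\mu(B)=2$ with $1\le|T|\le|V(G_{11})|$, and writing $G_{11}=K_{n_1,n_2}$ or $G_{11}=K_{n_1,n_2,n_3}$, a short case check (bounding $(n_1-1)(n_2-1)$, resp. $n_1n_2+n_1n_3+n_2n_3-n_1-n_2-n_3$) leaves only finitely many possibilities: $G_{11}\in\{P_3,\ K_{1,m}\ (m\ge3),\ C_4=K_{2,2},\ K_{2,3},\ K_3,\ K_{1,1,2}\}$, with $|T|$ equal to $3,\,3,\,2,\,1,\,2,\,1$ respectively.

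The third step is the enumeration. For each admissible $G_{11}$ one lists, up to its twin/automorphism structure, the possible positions of $T$ — for instance the centre together with two leaves, or three leaves, when $G_{11}=K_{1,m}$; two adjacent or two nonadjacent vertices when $G_{11}=K_{2,2}$; a vertex of a singleton class or of the $2$-class when $G_{11}=K_{1,1,2}$ — then reads off the resulting bicyclic underlying graph and the orientation condition inherited from $G_{11}^\sigma$. Grouping the outcomes by isomorphism type yields the seven families $B_1,\cdots,B_7$: the cases with no surviving cycle inside $G_{11}$ (hence no orientation constraint) account for the three families on which every edge may be oriented arbitrarily; the cases where exactly one $4$-cycle survives inside $G_{11}$ account for the two families on which that $C_4$ must be evenly-oriented; and the cases built on $K_{2,3}$ account for the last two families, where all the relevant $4$-vertex cycles must be evenly-oriented. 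The converse is immediate from Theorem \ref{skewrank 4}: each $B_i^\sigma$ with the stated property is of the described form, so $sr(B_i^\sigma)=4$, and each $B_i$ visibly carries a pendant vertex.

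The main obstacle is the bookkeeping in the third step. The same bicyclic graph can be produced by several different pairs $(G_{11},T)$ with a priori different orientation requirements, so one must identify the decomposition giving the genuinely weakest condition, match the figures one by one, and avoid both omissions and double counting; one must also confirm that the pendant-free bicyclic graphs ($\theta$-graphs, figure-eight graphs, dumbbells) are legitimately left out here — but these never arise from the normal form above, which always contains the pendant $x$. Once the normal form for $T$ inside each $G_{11}$ is fixed, computing $\mu(B)$, identifying the isomorphism type, and locating the pendant are routine.
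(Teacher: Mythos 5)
The paper itself offers no proof of this theorem: it is stated as a bare consequence of Theorem \ref{skewrank 4}, so your route --- reduce to the normal form of Theorem \ref{skewrank 4} (with Theorems \ref{three four vertices} and \ref{skewrank 2} supplying the list of connected oriented graphs of skew-rank $2$) and then enumerate --- is exactly the intended one, and your cyclomatic identity $\mu(B)=|E(G_{11})|-|V(G_{11})|+|T|=2$ is a correct and genuinely useful way to make the enumeration finite. The resulting list of admissible pairs $\bigl(G_{11},|T|\bigr)$, namely $K_{1,m}$ with $|T|=3$, $K_{2,2}$ with $|T|=2$, $K_{2,3}$ with $|T|=1$, $K_3$ with $|T|=2$, and $K_{1,1,2}$ with $|T|=1$, checks out.

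The gap is that step three --- the only step carrying the actual content of the theorem beyond Theorem \ref{skewrank 4} --- is asserted rather than performed, and your announced tally ($3+2+2$ families) does not obviously survive a careful count. Listing the positions of $T$ up to symmetry: $K_{1,m}$ gives two configurations (centre plus two leaves, or three leaves) and $K_3$ gives one, so three families with no orientation constraint, consistent with $B_1,B_2,B_3$; and $K_{2,3}$ with $|T|=1$ gives two (attachment in the $2$-class or the $3$-class), consistent with $B_6,B_7$. But the configurations whose condition is ``one specified $4$-cycle evenly-oriented'' number four, not two: $G_{11}=C_4$ with $T$ a pair of adjacent vertices (a $\theta(1,2,3)$ with pendants) or a pair of opposite vertices ($K_{2,3}$ with pendants directly at a $3$-class vertex), and $G_{11}=K_{1,1,2}$ with $y$ attached to a degree-$3$ or to a degree-$2$ vertex; these four underlying graphs are pairwise non-isomorphic, yet only the two slots $B_4,B_5$ are available. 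Either several of these are subsumed by the way $B_4,B_5$ are drawn in Fig.~4, or your grouping (and possibly the theorem's list) is incomplete --- but your proposal does not decide which, and it also does not carry out the check you yourself flag as essential, namely that for graphs arising from several decompositions (e.g.\ $K_{2,3}$ with pendants at a $3$-class vertex arises both from $G_{11}=C_4$ and, after relabelling, looks like a $B_6/B_7$-type graph) the \emph{weakest} orientation condition is the one recorded. Until the configurations are explicitly matched to the seven figures, with isomorphisms and weakest conditions verified, the ``only if'' direction of the classification is not established.
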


\begin{figure}[ht]
\center
\includegraphics [width = 12cm]{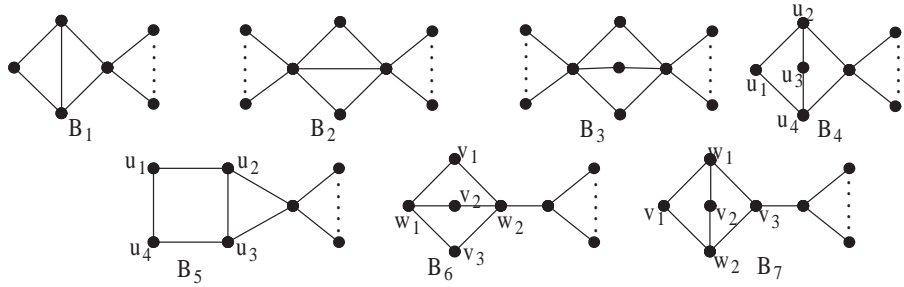}
\caption { \textit{ Seven unoriented bicyclic graphs $B_i$'s
$(i=1,2,\cdots,7)$} }
 \end{figure}

\section{Skew-rank of oriented unicyclic graphs}

In this section we determine the skew-rank of the oriented unicyclic
graphs of order $n$ with girth $k$ in terms of matching number.
Moreover, we investigate the minimum value of the skew-rank among
oriented unicyclic graphs of order $n$ with girth $k$ and
characterize the extremal oriented unicyclic graphs.

\begin{lemma}\label{basic subgraph}\cite{hou1, gong1}
Let $G^\sigma$ be an oriented graph of order $n$ with skew adjacency
matrix $S(G^\sigma)$ and its characteristic polynomial
$$\phi(G^\sigma,\lambda)=\sum_{i=0}^n(-1)^ia_i\lambda^{n-i}=\lambda^n-a_1\lambda^{n-1}+a_2\lambda^{n-2}+\cdots+(-1)^{n-1}a_{n-1}\lambda+(-1)^na_n.$$
Then $$a_i=\sum_{\mathscr{H}}(-1)^{c^+}2^{c}$$ if $i$ is even, where
the summation is over all basic oriented subgraphs $\mathscr{H}$ of
$G^\sigma$ having $i$ vertices and $c^+$, $c$ are the numbers of
evenly-oriented even cycles and even cycles contained in
$\mathscr{H}$, respectively. In particular, $a_i=0$ if $i$ is odd.
\end{lemma}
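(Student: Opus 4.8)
The plan is to compute the coefficients $a_i$ directly from the displayed definition $\phi(G^\sigma,\lambda)=\det(\lambda I-S)$, where $S=S(G^\sigma)$, by expanding the principal minors of $S$ combinatorially. First I would record the elementary fact that the coefficient of $\lambda^{n-i}$ in $\det(\lambda I-S)$ is $(-1)^i$ times the sum of all $i\times i$ principal minors of $S$, so that $a_i=\sum_{|W|=i}\det S[W]$, the sum ranging over all $i$-subsets $W\subseteq V(G^\sigma)$, with $S[W]$ the corresponding principal submatrix. Since $S$ is skew-symmetric, each $S[W]$ is skew-symmetric, and a skew-symmetric matrix of odd order has determinant $0$; hence $a_i=0$ for odd $i$, which settles the last assertion of the lemma.

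For even $i$, I would expand each minor via the Leibniz formula, $\det S[W]=\sum_{\pi}\mathrm{sgn}(\pi)\prod_{v\in W}s_{v,\pi(v)}$, over permutations $\pi$ of $W$. A term vanishes unless $s_{v,\pi(v)}\neq0$ for every $v$, i.e. unless $v\pi(v)\in E(G)$ for every $v$; in particular $\pi$ has no fixed point, since $s_{vv}=0$. Decomposing such a $\pi$ into disjoint cycles therefore produces a spanning ``linear subgraph'' $L$ of $G[W]$ whose components are edges (the $2$-cycles of $\pi$) and cycles of length $\geq3$ (the longer cycles of $\pi$); conversely each edge component of $L$ comes from a single $\pi$, while a cycle component of length $\ell\geq3$ comes from exactly two permutations, namely its two cyclic directions. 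Grouping the nonzero terms of $\det S[W]$ by the linear subgraph $L$ they determine, I would show that the contribution of $L$ factors over its components: an edge component $\{u,v\}$ always contributes $s_{uv}s_{vu}=-1$ together with the sign $-1$ of its transposition, giving net factor $+1$, whereas a cycle component $D$ of length $\ell$ contributes $(-1)^{\ell-1}P_D\bigl(1+(-1)^{\ell}\bigr)$, where $P_D$ is the product of the $s$-entries along one direction of $D$ (using $\prod s_{v_{j+1}v_j}=(-1)^\ell\prod s_{v_jv_{j+1}}$ for the reverse direction). Hence $L$ contributes $0$ the moment it contains a cycle of odd length, and the surviving $L$ are precisely the basic oriented subgraphs $\mathscr{H}$ of $G^\sigma$, each of whose components is a $K_2$ or an even cycle.

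It remains to pin down the contribution of a basic oriented subgraph $\mathscr{H}$ on $i$ vertices. For an even cycle $D$ of length $\ell$ one has $1+(-1)^\ell=2$ and $(-1)^{\ell-1}=-1$, and $P_D$ has absolute value $1$ with sign equal to $sgn(D^\sigma)$ — note $P_D$ is independent of the chosen direction and starting vertex exactly because $\ell$ is even, so this matches the definition of $sgn(D^\sigma)$ given above; thus $D$ contributes $-2\,sgn(D^\sigma)$, equal to $-2$ if $D$ is evenly-oriented and $+2$ if oddly-oriented. Multiplying over the components of $\mathscr{H}$: if $\mathscr{H}$ has $c=c^++c^-$ even cycles, $c^+$ evenly-oriented and $c^-$ oddly-oriented, its total contribution is $(-2)^c(-1)^{c^-}=(-1)^{c+c^-}2^c=(-1)^{c^+}2^c$. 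Finally, summing over all $i$-subsets $W$ and all spanning linear subgraphs of $G[W]$ amounts to summing over all basic oriented subgraphs $\mathscr{H}$ of $G^\sigma$ on $i$ vertices, which yields $a_i=\sum_{\mathscr{H}}(-1)^{c^+}2^c$. The only delicate point in this argument is the parity and sign bookkeeping of the last two paragraphs — the cancellation of odd cycles and the precise factor $-2\,sgn(D^\sigma)$ per even cycle — so I would carry out that computation in full detail and treat the rest as routine.
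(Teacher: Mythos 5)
Your proof is correct. The paper states this lemma only as a citation to \cite{hou1, gong1} and gives no proof of its own, but your argument --- writing $a_i$ as the sum of $i\times i$ principal minors, killing odd $i$ by skew-symmetry, expanding each minor by the Leibniz formula, grouping permutations by their underlying linear subgraph, and checking that odd cycles cancel via $1+(-1)^{\ell}=0$ while each even cycle contributes $-2\,sgn(D^\sigma)$ so that the total is $(-2)^{c}(-1)^{c^{-}}=(-1)^{c^{+}}2^{c}$ --- is exactly the standard Sachs-type derivation used in those references, and the sign bookkeeping is right.
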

\begin{theorem}
Let $G^\sigma$ be an oriented unicyclic graph of order $n$ with
girth $k$ and matching number $\beta(G^\sigma)$. Then
$$sr(G^\sigma)=\left\{\begin{array}{cc}2\beta(G^\sigma)-2,&\mbox{if
$C_k$ is evenly-oriented and
$\beta(G^\sigma)=2\beta(G^\sigma-C^\sigma_k)$,}\\\beta(G^\sigma),&\mbox{ortherwise.}\end{array}\right.$$
\end{theorem}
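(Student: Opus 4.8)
The plan is to read off $sr(G^\sigma)$ from the characteristic polynomial. Since $S(G^\sigma)$ is skew-symmetric, its nonzero eigenvalues occur in conjugate pairs $\pm i\mu_j$ with $\mu_j\neq 0$, so $\phi(G^\sigma,\lambda)=\lambda^{\,n-sr(G^\sigma)}\prod_j(\lambda^2+\mu_j^2)$; hence $a_i=0$ for every $i>sr(G^\sigma)$ while $a_{sr(G^\sigma)}\neq 0$, and therefore $sr(G^\sigma)=\max\{i:a_i\neq 0\}$. So it suffices to locate the largest index with $a_i\neq 0$, and I would compute the $a_i$ from Lemma \ref{basic subgraph}. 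Because $C_k$ is the unique cycle of $G$, every basic oriented subgraph $\mathscr{H}$ of $G^\sigma$ is either (a) a matching, contributing $(-1)^0 2^0=1$, or (b) --- and only when $k$ is even --- the union of $C_k^\sigma$ with a matching of $G^\sigma-C_k^\sigma$, contributing $-2$ if $C_k^\sigma$ is evenly-oriented and $+2$ if oddly-oriented. Consequently, for even $i$,
$$a_i=m_{G^\sigma}(i/2)+\varepsilon\cdot 2\,m_{G^\sigma-C_k^\sigma}\big((i-k)/2\big),$$
where the second term is read as $0$ if $k$ is odd or $i<k$, and $\varepsilon=-1$ when $C_k^\sigma$ is evenly-oriented, $\varepsilon=+1$ otherwise.

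The next step is a short case analysis of the top coefficient. Adjoining a perfect matching of $C_k$ to a matching of $G^\sigma-C_k^\sigma$ produces a matching of $G^\sigma$; this gives $\beta(G^\sigma)\geq \beta(G^\sigma-C_k^\sigma)+k/2$ when $k$ is even (and $\beta(G^\sigma)\geq k/2$ in any case, since $C_k\subseteq G$), and, for $i>2\beta(G^\sigma)$, it forces both terms of $a_i$ to vanish, so $sr(G^\sigma)\leq 2\beta(G^\sigma)$. If $k$ is odd, or $C_k^\sigma$ is oddly-oriented, then $a_{2\beta(G^\sigma)}\geq m_{G^\sigma}(\beta(G^\sigma))>0$, whence $sr(G^\sigma)=2\beta(G^\sigma)$. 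If $C_k^\sigma$ is evenly-oriented, then $a_{2\beta(G^\sigma)}=m_{G^\sigma}(\beta(G^\sigma))-2\,m_{G^\sigma-C_k^\sigma}(\beta(G^\sigma)-k/2)$; using the inequality above, this coefficient is nonzero exactly when the matching condition in the statement fails, in which case again $sr(G^\sigma)=2\beta(G^\sigma)$, and it equals $0$ exactly when that condition holds.

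It then remains to treat the case $a_{2\beta(G^\sigma)}=0$, that is, $C_k^\sigma$ evenly-oriented with the matching condition holding, where the claim is $sr(G^\sigma)=2\beta(G^\sigma)-2$. For this I would prove $a_{2\beta(G^\sigma)-2}>0$ by lower-bounding $m_{G^\sigma}(\beta(G^\sigma)-1)$ via two explicit, mutually disjoint families of $(\beta(G^\sigma)-1)$-matchings: a near-maximum matching of $G^\sigma-C_k^\sigma$ (with $\beta(G^\sigma-C_k^\sigma)-1$ edges) extended by a perfect matching of $C_k$; and a maximum matching of $G^\sigma-C_k^\sigma$ extended by a $(k/2-1)$-edge matching of $C_k$. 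The first family accounts precisely for the subtracted term $2\,m_{G^\sigma-C_k^\sigma}(\beta(G^\sigma)-1-k/2)$, and the second family (disjoint from the first, since it uses only $k/2-1$ edges of the cycle) is nonempty because $m_{C_k}(k/2-1)\geq 1$; hence $a_{2\beta(G^\sigma)-2}$ is strictly positive and $sr(G^\sigma)=2\beta(G^\sigma)-2$.

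The delicate point is this last paragraph: pinning down the precise, orientation-free matching-theoretic condition equivalent to $m_{G^\sigma}(\beta(G^\sigma))=2\,m_{G^\sigma-C_k^\sigma}(\beta(G^\sigma)-k/2)$, and --- above all --- verifying that when the leading coefficient cancels the next one does not, so that the skew-rank drops by exactly $2$ and not more. The structural facts that make this go through --- that every maximum matching of a unicyclic graph uses either a perfect matching of its cycle or at most $k/2-1$ edges of that cycle, together with $\beta(G^\sigma)\geq\beta(G^\sigma-C_k^\sigma)+k/2$ and $m_{C_k}(k/2-1)\geq1$ --- are elementary, but they must be assembled with care; everything else is routine manipulation of the coefficient formula of Lemma \ref{basic subgraph}.
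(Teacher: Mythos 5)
Your proposal is correct and follows essentially the same route as the paper: both read $sr(G^\sigma)$ off as the largest index with $a_i\neq 0$, compute $a_{2\beta(G^\sigma)}$ and $a_{2\beta(G^\sigma)-2}$ from Lemma \ref{basic subgraph} by splitting basic subgraphs into pure matchings versus those containing $C_k^\sigma$, and in the degenerate evenly-oriented case prove $a_{2\beta(G^\sigma)-2}\neq 0$ by exactly the same two-family counting argument (perfect-matching-of-$C_k$ extensions versus a $(k/2-1)$-edge-on-the-cycle matching lying outside that family). Your version is, if anything, slightly more explicit about the identity $sr(G^\sigma)=\max\{i: a_i\neq 0\}$ and about reading the theorem's condition as $m_{G^\sigma}(\beta(G^\sigma))=2\,m_{G^\sigma-C_k^\sigma}(\beta(G^\sigma)-k/2)$, which is indeed what the paper's (notationally abused) condition means.
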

\begin{proof}
If $i>\beta(G^\sigma)$, $G^\sigma$ contains no basic oriented
subgraphs with $2i$ vertices and $a_{2i}=0$. Suppose that $i\leq
\beta(G^\sigma)$. Note that $\lambda^{n-2\beta(G^\sigma)}$ is a
factor of the characteristic polynomial $\phi(G^\sigma,\lambda)$ of
$S(G^\sigma)$, which implies $sr(G^\sigma)\leq 2\beta(G^\sigma)$. So
we consider the coefficient $a_{2\beta(G^\sigma)}$. Next we divide
into three cases to verify this result.

\textbf{Case 1.} $k$ is odd.

Note that there does not exist even cycle in every basic oriented
subgraph $\mathscr{H}$. So
$a_{2\beta(G^\sigma)}=\sum_{\mathscr{H}}(-1)^{0}2^0=\sum_{\mathscr{H}}1\neq0$.
It yields $sr(G^\sigma)=2\beta(G^\sigma)$.

\textbf{Case 2.} $k$ is even and $C_k^\sigma$ is oddly-oriented.

There exists an even cycle in some basic oriented subgraph, but no
evenly-oriented cycle in any basic oriented subgraph. So
$a_{2\beta(G^\sigma)}\neq0$ which implies
$sr(G^\sigma)=2\beta(G^\sigma)$.

\textbf{Case 3.} $k$ is even and $C_k^\sigma$ is evenly-oriented.

Let $\mathcal {H}$ be the set of basic oriented subgraphs on
$2\beta(G^\sigma)$ vertices. Let $\mathcal {H}_1$ be the set of
basic oriented subgraphs on $2\beta(G^\sigma)$ vertices which
contain only $\beta(G^\sigma)$ copies of $K_2$. Let $\mathcal {H}_2$
be the set of basic oriented subgraphs on $2\beta(G^\sigma)$
vertices which contain $C_k^\sigma$ and
$\beta(G^\sigma)-\frac{k}{2}$ copies of $K_2$. Obviously, $\mathcal
{H}=\mathcal {H}_1+\mathcal {H}_2$. Thus
\begin{eqnarray*}
a_{2\beta(G^\sigma)}&=&\sum_{\mathscr{H}\in \mathcal
{H}_1}(-1)^0\cdot 2^0+\sum_{\mathscr{H}\in \mathcal
{H}_2}(-1)^1\cdot
2^1\\
&=&\beta(G^\sigma)-2\beta(G^\sigma-C^\sigma_k).
\end{eqnarray*}
It is evident that $sr(G^\sigma)=2\beta(G^\sigma)$ if
$\beta(G^\sigma)-2\beta(G^\sigma-C^\sigma_k)\neq0$ and
$sr(G^\sigma)<2\beta(G^\sigma)$ if
$\beta(G^\sigma)-2\beta(G^\sigma-C^\sigma_k)=0$. In what follows we
shall verify $sr(G^\sigma)=2\beta(G^\sigma)-2$, i.e.
$a_{2\beta(G^\sigma)-2}\neq 0$, if
$\beta(G^\sigma)-2\beta(G^\sigma-C^\sigma_k)=0$. Let $\mathcal
{H}^\prime_1$ be the set of basic oriented subgraphs on
$2\beta(G^\sigma)-2$ vertices which contain only $\beta(G^\sigma)-1$
copies of $K_2$. Let $\mathcal {H}^\prime_2$ be the set of basic
oriented subgraphs on $2\beta(G^\sigma)-2$ vertices which contain
$C_k^\sigma$ and $\beta(G^\sigma)-\frac{k}{2}-1$ copies of $K_2$. By
Lemma \ref{basic subgraph}, we have
\begin{eqnarray*}
a_{2\beta(G^\sigma)-2}&=&\sum_{\mathscr{H}\in \mathcal
{H}^\prime_1}(-1)^0\cdot
2^0+\sum_{\mathscr{H}\in \mathcal {H}^\prime_2}(-1)^1\cdot 2^1\\
&=&m_{_{G^\sigma}}\big(\beta(G^\sigma)-1\big)-2m_{_{G^\sigma-C_k^\sigma}}\big(\beta(G^\sigma-C^\sigma_k)-1\big).
\end{eqnarray*}
For convenience, we introduce three notations.

$\mathcal {S}_1:\mbox{the set of $(\beta(G^\sigma)-1)$-matchings of
$G^\sigma$};$

$\mathcal {S}_2:\mbox{the set of
$(\beta(G^\sigma-C_k^\sigma)-1)$-matchings of
$G^\sigma-C_k^\sigma$};$

$\mathcal {S}_3=\{M^\prime\mid M^\prime=C_k^\sigma\cup M, \,\, M\in
\mathcal {S}_2\}.$

It is evident that $|\mathcal {S}_1|\geq2|\mathcal {S}_2|$ and
$|\mathcal {S}_2|=|\mathcal {S}_3|$. Next we shall verify that
$m_{_{G^\sigma}}\big(\beta(G^\sigma)-1\big)-2m_{_{G^\sigma-C_k^\sigma}}\big(\beta(G^\sigma-C^\sigma_k)-1\big)\neq
0$. Since $|\mathcal {S}_1|=m_{_{G^\sigma}}(\beta(G^\sigma)-1)$ and
$|\mathcal
{S}_2|=m_{_{G^\sigma-C_k^\sigma}}\big(\beta(G^\sigma-C^\sigma_k)-1\big)$,
so we only verify that $|\mathcal {S}_1|>2|\mathcal {S}_2|$. Note
that $C_k^\sigma$ has exactly two perfect matchings $M_1$, $M_2$
with $\frac{k}{2}$ edges. Suppose that $\mathcal {S}^*=\{M_1\cup M|
M\in \mathcal {S}_2\}\cup \{M_2\cup M| M\in \mathcal {S}_2\}$. So
$|\mathcal {S}^*|=2|\mathcal {S}_2|=2|\mathcal {S}_3|$ and
$|\mathcal {S}^*|\leq |\mathcal {S}_1|$. It is evident that there
exists a $(\beta(G^\sigma)-1)$-matching $M^*$, which is the union of
a matching of $G^\sigma-C_k^\sigma$ with
$\beta(G^\sigma)-\frac{k}{2}$ edges and a matching of $C_k^\sigma$
with $\frac{k}{2}-1$ edges, such that $M^*\in \mathcal {S}_1$ and
$M^*\notin \mathcal {S}^*$. It follows that $|\mathcal {S}_1|\geq
|\mathcal {S}^*|+1=2|\mathcal {S}_2|+1>2|\mathcal {S}_2|$. Thus the
result follows.
\end{proof}

Let $H_{n,k}$ be an underlying graph obtained from $C_k$ by
attaching $n-k$ pendant edges to some vertex on $C_k$.
\begin{theorem}\label{minimum value}
Let $G^\sigma$ be an oriented unicyclic graph of order $n$ with
girth $k$ $(n>k)$. Then
$$sr(G^\sigma)\geq \left\{\begin{array}{cc}k,&\mbox{$k$ is even,}\\k+1,&\mbox{$k$ is odd.}\end{array}\right.$$
This bound is sharp.
\end{theorem}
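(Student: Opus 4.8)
The plan is to obtain the lower bound by deleting one pendant edge and then reading off the skew-rank of the cycle fragment that survives. Since $n>k$, the underlying graph $G$ is not $C_n$, so the connected unicyclic graph $G$ has a vertex of degree $1$; fix such a pendant vertex $v$ and let $u$ be its unique neighbour. By Lemma \ref{deleting pendent vertex}, $sr(G^\sigma)=sr(G^\sigma-u-v)+2$. Note that $v\notin V(C_k)$ because every cycle vertex has degree at least $2$, so there are two cases. If $u\notin V(C_k)$, then the unique cycle $C_k$ (which is induced in $G$, since a chord would produce a second cycle, hence induced in $G-u-v$) is an induced subgraph of $G^\sigma-u-v$, so by Lemma \ref{basic lemma}(i) and Lemma \ref{path and cycle}, $sr(G^\sigma-u-v)\geq sr(C_k^\sigma)$, which is $k$ or $k-2$ when $k$ is even and $k-1$ when $k$ is odd (an odd cycle falls into the third case of Lemma \ref{path and cycle}). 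If $u\in V(C_k)$, then in $G-u-v$ the remaining cycle vertices induce precisely the path $C_k-u\cong P_{k-1}$, so by Lemma \ref{basic lemma}(i) and Lemma \ref{rank of path}, $sr(G^\sigma-u-v)\geq sr(P_{k-1}^\sigma)$, which equals $k-2$ when $k$ is even and $k-1$ when $k$ is odd. In both cases $sr(G^\sigma)\geq k$ for even $k$ and $sr(G^\sigma)\geq k+1$ for odd $k$.

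For sharpness I would take $G^\sigma$ to be any orientation of $H_{n,k}$. The $n-k$ pendant vertices of $H_{n,k}$ are all attached to the same cycle vertex, hence are pairwise pendant twins; repeatedly deleting such a twin does not change the skew-rank, and we are reduced to $H_{k+1,k}$, namely $C_k$ with a single pendant edge $w_1z$. Applying the $\delta$-transformation at $z$ (Lemma \ref{deleting pendent vertex}) gives $sr(H_{n,k}^\sigma)=sr((C_k-w_1)^\sigma)+2=sr(P_{k-1}^\sigma)+2$, and by Lemma \ref{rank of path} this is $k$ when $k$ is even and $k+1$ when $k$ is odd. Hence the bound is attained for all admissible $n$ and $k$.

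All the computations here are routine; the only point needing care is the case distinction in the lower bound, where one must check that after removing $u$ and $v$ the surviving part of $C_k$ is genuinely an \emph{induced} subgraph (so that Lemma \ref{basic lemma}(i) applies) — this is exactly where unicyclicity is used. Alternatively, the lower bound follows from the matching-number formula proved just above: unless $k$ is even, $C_k^\sigma$ is evenly-oriented and $\beta(G^\sigma)=2\beta(G^\sigma-C_k^\sigma)$, one has $sr(G^\sigma)=2\beta(G^\sigma)\geq 2\beta(C_k)=k$, which for odd $k$ is sharpened to $k+1$ by combining a perfect matching of $C_k-w$ with an edge from a cycle vertex $w$ to a neighbour outside $C_k$; and in that exceptional case $\beta(G^\sigma)=2\beta(G^\sigma-C_k^\sigma)$ together with $\beta(G^\sigma)\geq\beta(G^\sigma-C_k^\sigma)+k/2$ forces $\beta(G^\sigma)\geq k$, so $sr(G^\sigma)=2\beta(G^\sigma)-2\geq k$.
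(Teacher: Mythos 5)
Your main argument is correct and is in substance the paper's own proof, just packaged differently: the paper observes that since $n>k$ the graph contains $H_{k+1,k}^\sigma$ (the cycle with one pendant edge) as an induced subgraph, applies Lemma \ref{deleting pendent vertex} to that subgraph to get $sr(H_{k+1,k}^\sigma)=sr(P_{k-1}^\sigma)+2$, and invokes Lemma \ref{rank of path}; you instead delete an arbitrary pendant edge of $G^\sigma$ first and then bound the remainder by the induced $C_k^\sigma$ or $P_{k-1}^\sigma$, which forces the two-case analysis but lands in the same place. Your sharpness argument (pendant twins reduce $H_{n,k}^\sigma$ to $H_{k+1,k}^\sigma$, independently of orientation) is exactly the paper's closing remark. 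The one genuinely different contribution is your final alternative derivation from the matching-number formula of the preceding theorem: bounding $\beta(G^\sigma)$ below by $\lceil k/2\rceil$ plus one extra edge off the cycle in the generic case, and using $\beta(G^\sigma)\geq\beta(G^\sigma-C_k^\sigma)+k/2$ in the exceptional evenly-oriented case. That route is correct and has the virtue of showing the bound follows purely from matching counts, but it depends on the harder coefficient computation of the previous theorem, whereas the paper's induced-subgraph argument needs only the elementary Lemmas \ref{basic lemma}, \ref{rank of path} and \ref{deleting pendent vertex}.
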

\begin{proof}
Since $G^\sigma$ must contain $H_{k+1,k}^\sigma$ as an induced
subgraph, so $sr(H_{k+1,k}^\sigma)\leq sr(G^\sigma)$ by Lemma
\ref{basic lemma}. By Lemmas \ref{rank of path} and \ref{deleting
pendent vertex}, we have
$$sr(H_{k+1,k}^\sigma)= \left\{\begin{array}{cc}k,&\mbox{$k$ is
even,}\\k+1,&\mbox{$k$ is odd.}\end{array}\right.$$ Note that all
oriented graphs with $H_{n,k}$ as the underlying graph have the same
skew rank as $H_{k+1,k}^\sigma$. So the result holds.
\end{proof}

The following results can be derived by similar method in Theorems
3.1 and 3.3 in \cite{fan1}.
\begin{lemma}
Let $T^\sigma$ be an oriented tree with $u\in V(T^\sigma)$ and
$G^\sigma_0$ be an oriented graph different from $T^\sigma$. Let
$G^\sigma$ be a graph obtained from $G^\sigma_0$ and $T^\sigma$ by
joining $u$ with certain vertices of $G^\sigma_0$. Then the
following statements hold:
\begin{enumerate}
\item If $u$ is saturated in $T^\sigma$, then
$$sr(G^\sigma)=sr(G^\sigma_0)+sr(T^\sigma).$$
\item If $u$ is unsaturated in $T^\sigma$, then
$$sr(G^\sigma)=sr(T^\sigma-u)+sr(G^\sigma_0+u),$$ where $G^\sigma_0+u$ is
the subgraph of $G^\sigma$ induced by the vertices of $G^\sigma_0$
and $u$.
\end{enumerate}
\end{lemma}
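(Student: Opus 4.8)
The plan is to prove statements (1) and (2) together by induction on $|V(T^\sigma)|$, stripping $T^\sigma$ off one leaf at a time by means of the $\delta$-transformation of Lemma~\ref{deleting pendent vertex} while keeping track of the behaviour at $u$. When $|V(T^\sigma)|=1$ we have $T^\sigma=\{u\}$, $u$ is unsaturated, and (2) reads $sr(G^\sigma)=0+sr(G^\sigma_0+u)=sr(G^\sigma)$, so the base case is trivial; thus assume $|V(T^\sigma)|\ge 2$.

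For the inductive step I would first dispose of the configuration in which $T^\sigma$ has no leaf $p\neq u$ whose neighbour is also distinct from $u$. Looking at a longest path of $T^\sigma$ having $u$ as an endpoint shows that this can only happen when $T^\sigma=K_2$ or $T^\sigma$ is a star with centre $u$; in either case $u$ is saturated in $T^\sigma$, so only (1) needs to be checked. Picking any leaf $p$ of $T^\sigma$, its neighbour is $u$, so $p$ is still pendant in $G^\sigma$ (all added edges meet $u$, hence not $p$), and deleting $u$ destroys every added edge and isolates the remaining leaves of $T^\sigma$. Hence Lemma~\ref{deleting pendent vertex} gives $sr(G^\sigma)=sr(G^\sigma-p-u)+2=sr(G^\sigma_0)+2=sr(G^\sigma_0)+sr(T^\sigma)$, which is (1).

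Otherwise choose a leaf $p\neq u$ of $T^\sigma$ whose unique neighbour $q$ satisfies $q\neq u$; then $p$ is still pendant in $G^\sigma$ with unique neighbour $q$, so $sr(G^\sigma)=sr(G^\sigma-p-q)+2$ by Lemma~\ref{deleting pendent vertex}. Decompose the forest $T^\sigma-p-q$ as $\hat T_0^\sigma\cup\hat T_1^\sigma\cup\cdots\cup\hat T_m^\sigma$, where $u\in V(\hat T_0^\sigma)$ and $\hat T_1^\sigma,\dots,\hat T_m^\sigma$ are the branches that formerly hung at $q$. In $G^\sigma-p-q$ these branches are joined to nothing and hence form whole connected components, while the remaining part $\hat G^\sigma$ is precisely the graph obtained from $G^\sigma_0$ and $\hat T_0^\sigma$ by joining $u$ with the same vertices of $G^\sigma_0$, and $|V(\hat T_0^\sigma)|<|V(T^\sigma)|$. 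By Lemma~\ref{basic lemma}(ii) and Lemma~\ref{rank of tree} (which together yield $sr(F^\sigma)=2\beta(F)$ for every oriented forest $F$) and the induction hypothesis applied to $\hat G^\sigma$,
$$sr(G^\sigma)=2+2\sum_{i=1}^{m}\beta(\hat T_i)+\begin{cases}sr(G^\sigma_0)+2\beta(\hat T_0),&u\text{ saturated in }\hat T_0^\sigma,\\ 2\beta(\hat T_0-u)+sr(G^\sigma_0+u),&u\text{ unsaturated in }\hat T_0^\sigma.\end{cases}$$

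What remains is to supply two elementary facts about matchings in forests: (a) $u$ is saturated in $T^\sigma$ exactly when it is saturated in $\hat T_0^\sigma$; and (b) $\beta(T-u)=1+\beta(\hat T_0-u)+\sum_{i=1}^{m}\beta(\hat T_i)$. Both are quick consequences of the identity $\beta(F)=1+\beta(F-p-q)$ for a forest $F$ with a leaf $p$ and neighbour $q$ (some maximum matching of $F$ contains the edge $pq$), combined with the familiar fact that $u$ is unsaturated in a graph $H$ if and only if $\beta(H-u)=\beta(H)$. Feeding (a) and (b) into the displayed formula turns the saturated case into $sr(G^\sigma)=sr(G^\sigma_0)+2\beta(T)=sr(G^\sigma_0)+sr(T^\sigma)$, i.e.\ (1), and the unsaturated case into $sr(G^\sigma)=2\beta(T-u)+sr(G^\sigma_0+u)=sr(T^\sigma-u)+sr(G^\sigma_0+u)$, i.e.\ (2). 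I expect the main difficulty to be organisational rather than conceptual: making the leaf-selection case split exhaustive (the ``star with centre $u$'' exception is the one easy to overlook) and decomposing $T^\sigma-p-q$ carefully, so that the induction hypothesis is applied to exactly one ``glued'' component $\hat T_0^\sigma$ while the other branches enter only through their matching numbers; once (a)--(b) are in hand the rest is bookkeeping.
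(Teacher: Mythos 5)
Your proof is correct. The paper gives no argument of its own for this lemma --- it only remarks that the result ``can be derived by similar method in Theorems 3.1 and 3.3 in \cite{fan1}'', and that method is exactly the induction on $|V(T^\sigma)|$ via pendant-vertex deletion (the $\delta$-transformation of Lemma \ref{deleting pendent vertex}) that you carry out, so your write-up simply supplies the details the paper omits, including the exhaustive leaf-selection case split (with the star-centred-at-$u$ configuration) and the two matching identities needed to close the induction.
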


By the above result, we have
\begin{theorem}\label{inertia of unicyclic graphs}
Let $G^\sigma$ be an oriented unicyclic graph and $C^\sigma$ be the
unique oriented cycle in $G^\sigma$. Then the following statements
hold:
\begin{enumerate}
\item If there exists a vertex $v\in V(C^\sigma)$ which is saturated
in $G^\sigma\{v\}$, then
$$sr(G^\sigma)=sr(G^\sigma\{v\})+sr(G^\sigma-G^\sigma\{v\}),$$ where
$G^\sigma\{v\}$ is an oriented tree rooted at $v$ and containing
$v$.
\item If there does not exit a vertex $v\in V(C^\sigma)$ which is saturated in
$G^\sigma\{v\}$, then
$$sr(G^\sigma)=sr(C^\sigma)+sr(G^\sigma-C^\sigma).$$
\end{enumerate}
\end{theorem}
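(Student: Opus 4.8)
The plan is to apply the preceding lemma (the "oriented tree attached at $u$" splitting lemma) with a judiciously chosen tree, and to split into the two cases according to whether some cycle vertex is saturated in the tree hanging off it. First I would fix notation: for each $v\in V(C^\sigma)$, let $G^\sigma\{v\}$ denote the connected component of $v$ in the forest obtained from $G^\sigma$ by deleting all edges of $C^\sigma$; this is an oriented tree rooted at $v$, and the $G^\sigma\{v\}$ for $v$ ranging over $V(C^\sigma)$ partition $V(G^\sigma)$. In the notation of the splitting lemma I would take $T^\sigma = G^\sigma\{v\}$ and let $G^\sigma_0 = G^\sigma - G^\sigma\{v\}$, which is itself a unicyclic (or, if $G^\sigma\{v\}$ is trivial at every cycle vertex except one, possibly a smaller) oriented graph; the edges of $G^\sigma$ joining $v$ to $G^\sigma_0$ are exactly the two cycle-edges at $v$, so $G^\sigma$ is indeed obtained from $G^\sigma_0$ and $T^\sigma$ by joining $u=v$ to certain vertices of $G^\sigma_0$. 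The lemma then applies verbatim.

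For part (1), suppose some $v\in V(C^\sigma)$ is saturated in $G^\sigma\{v\}$. Applying case (1) of the splitting lemma with this $v$ gives immediately
$$sr(G^\sigma)=sr(G^\sigma\{v\})+sr(G^\sigma_0)=sr(G^\sigma\{v\})+sr(G^\sigma-G^\sigma\{v\}),$$
which is exactly the claimed identity. For part (2), suppose no $v\in V(C^\sigma)$ is saturated in $G^\sigma\{v\}$; equivalently, every cycle vertex $v$ is unsaturated in its own pendant tree $G^\sigma\{v\}$. I would prove this case by induction on the number of cycle vertices carrying a nontrivial pendant tree (i.e. on $n-k$, or more precisely on the number of $v\in V(C^\sigma)$ with $|G^\sigma\{v\}|>1$). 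If every $G^\sigma\{v\}$ is trivial then $G^\sigma=C^\sigma$ and the identity reads $sr(C^\sigma)=sr(C^\sigma)+sr(\emptyset)$, which holds by Lemma \ref{basic lemma}(iii). Otherwise pick a cycle vertex $v$ with $|G^\sigma\{v\}|>1$; since $v$ is unsaturated in $G^\sigma\{v\}$, case (2) of the splitting lemma gives
$$sr(G^\sigma)=sr\big(G^\sigma\{v\}-v\big)+sr\big(G^\sigma_0+v\big),$$
where $G^\sigma_0+v$ is the induced subgraph on $V(G^\sigma_0)\cup\{v\}$; this is again a unicyclic oriented graph with the same cycle $C^\sigma$, but with a strictly smaller pendant tree at $v$ (namely the trivial one), and with all the other pendant trees unchanged. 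The main point I must check is that the hypothesis of case (2) is inherited: for every $w\in V(C^\sigma)$, $w$ is still unsaturated in its pendant tree inside $G^\sigma_0+v$. For $w\neq v$ the pendant tree is unchanged, so this is the original hypothesis; for $w=v$ the pendant tree is trivial and $v$ is trivially unsaturated in it. Hence by the induction hypothesis applied to $G^\sigma_0+v$,
$$sr\big(G^\sigma_0+v\big)=sr(C^\sigma)+sr\big((G^\sigma_0+v)-C^\sigma\big).$$
Combining the two displayed equalities and observing that $(G^\sigma\{v\}-v)\ \cup\ \big((G^\sigma_0+v)-C^\sigma\big)$ is precisely $G^\sigma-C^\sigma$ (a disjoint union of the pendant trees with their cycle roots removed), Lemma \ref{basic lemma}(ii) yields $sr(G^\sigma)=sr(C^\sigma)+sr(G^\sigma-C^\sigma)$, completing the induction.

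The main obstacle, and the step deserving the most care, is the bookkeeping in part (2): one must be sure that the "unsaturated at every cycle vertex" condition really is preserved under the reduction $G^\sigma \mapsto G^\sigma_0+v$, and that the residual graphs combine correctly so that $sr(G^\sigma-C^\sigma)$ comes out as the sum $sr(G^\sigma\{v\}-v)+sr((G^\sigma_0+v)-C^\sigma)$. A subtle point here is the meaning of "saturated/unsaturated in $G^\sigma\{v\}$": it refers to whether $v$ lies in every maximum matching of the rooted tree $G^\sigma\{v\}$, and one should double-check that deleting $v$ from $G^\sigma\{v\}$ (case 2 of the lemma) combined with the inductive structure does not accidentally create a situation where a previously unsaturated cycle vertex becomes saturated — it does not, because the pendant trees at distinct cycle vertices are vertex-disjoint and are not touched by the reduction, but this disjointness should be stated explicitly. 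Everything else is a direct appeal to the splitting lemma and to the additivity of skew-rank over connected components (Lemma \ref{basic lemma}(ii)).
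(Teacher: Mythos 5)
Your proposal is correct and follows exactly the route the paper intends: the paper derives this theorem directly from the preceding tree-splitting lemma (the one adapted from Theorems 3.1 and 3.3 of the Gong--Fan--Yin nullity paper) and omits all details, while you supply them — part (1) is the lemma's saturated case verbatim, and part (2) is the lemma's unsaturated case iterated over the cycle vertices with nontrivial pendant trees, together with the additivity of skew-rank over disjoint unions. Your explicit check that the ``every cycle vertex unsaturated in its pendant tree'' hypothesis is preserved under the reduction, and that the leftover pieces reassemble into $G^\sigma-C^\sigma$, is precisely the bookkeeping the paper leaves implicit.
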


Let $U^*$ be an underlying graph which is obtained from a cycle
$C_k$ and a star $S_{n-k}$ by inserting an edge between a vertex on
$C_k$ and the center of $S_{n-k}$.
\begin{theorem}
Let $G^\sigma$ be an oriented unicyclic graph of order $n$ and
$C_k^\sigma$ be the unique oriented cycle in $G^\sigma$. Assume that
$sr(G^\sigma)=\left\{\begin{array}{cc}k,&\mbox{$k$ is
even,}\\k+1,&\mbox{$k$ is odd.}\end{array}\right.$ Then the
following statements hold:
\begin{enumerate}
\item If there exists a vertex $v\in V(C_k^\sigma)$ which is
saturated in $G^\sigma\{v\}$, then $G^\sigma\{v\}$ is an oriented
star,
$m(G^\sigma-G\{v\})=\left\{\begin{array}{cc}\frac{k-2}{2},&\mbox{$k$
is even,}\\\frac{k-1}{2},&\mbox{$k$ is odd.}\end{array}\right.$ and
$G^\sigma$ has any orientation;
\item If there does not exist a vertex $v\in V(C_k^\sigma)$ which is
saturated in $G^\sigma\{v\}$, then
\begin{enumerate}
\item If $k$ is odd, then $G\cong U^*$ and
$G^\sigma$ has any orientation;
\item If $k$ is even, then $G\cong U^*$ and
$C_k^\sigma$ is evenly-oriented.
\end{enumerate}
\end{enumerate}
\end{theorem}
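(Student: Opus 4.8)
The plan is to exploit the structural dichotomy already provided by Theorem~\ref{inertia of unicyclic graphs}, treating the two cases of the hypothesis separately and in each case extracting tight constraints from the assumed minimality of $sr(G^\sigma)$ (namely $sr(G^\sigma)=k$ when $k$ is even and $k+1$ when $k$ is odd, the value attained by $H_{k+1,k}^\sigma$ in Theorem~\ref{minimum value}).

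For part (1), suppose $v\in V(C_k^\sigma)$ is saturated in $G^\sigma\{v\}$. By Theorem~\ref{inertia of unicyclic graphs}(1) we have $sr(G^\sigma)=sr(G^\sigma\{v\})+sr(G^\sigma-G^\sigma\{v\})$. The subgraph $G^\sigma-G^\sigma\{v\}$ is an oriented unicyclic-or-cyclic piece containing $C_k^\sigma$ with $v$ removed, hence its skew-rank is at least that of an oriented path $P_{k-1}^\sigma$, which by Lemma~\ref{rank of path} equals $k-2$ if $k$ is even and $k-1$ if $k$ is odd; in fact since it contains the oriented cycle minus a vertex, one checks $sr(G^\sigma-G^\sigma\{v\})\ge k-2$ for $k$ even and $\ge k-1$ for $k$ odd, with equality forcing $G^\sigma-G^\sigma\{v\}$ to be (essentially) a path $P_{k-1}^\sigma$ plus isolated vertices — i.e. $C_k$ with all pendant structure concentrated so that only isolated vertices remain after removing $G^\sigma\{v\}$. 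Since $sr(G^\sigma\{v\})\ge 2$ (it is a nontrivial oriented tree, as $v$ is saturated there), the sum is at least $k$ (resp.\ $k+1$), and equality in the hypothesis forces $sr(G^\sigma\{v\})=2$ together with the equality in the path bound. By Lemma~\ref{rank of tree}, $sr(G^\sigma\{v\})=2$ means the oriented tree $G^\sigma\{v\}$ has matching number $1$, i.e.\ it is an oriented star. The equality $sr(G^\sigma-G^\sigma\{v\})=k-2$ (resp.\ $k-1$) combined with Lemma~\ref{rank of path} / Lemma~\ref{deleting pendent vertex} shows $G^\sigma-G^\sigma\{v\}$ consists of a $P_{k-1}$ together with isolated vertices, whence $m(G^\sigma-G^\sigma\{v\})=\frac{k-2}{2}$ (resp.\ $\frac{k-1}{2}$); and since no even cycle survives in $G^\sigma-G^\sigma\{v\}$ the orientation is irrelevant, so $G^\sigma$ may carry any orientation. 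That is exactly the stated conclusion.

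For part (2), suppose no vertex of $C_k^\sigma$ is saturated in its rooted tree; then Theorem~\ref{inertia of unicyclic graphs}(2) gives $sr(G^\sigma)=sr(C_k^\sigma)+sr(G^\sigma-C_k^\sigma)$. By Lemma~\ref{path and cycle}, $sr(C_k^\sigma)\ge k-2$ with equality iff $C_k^\sigma$ is evenly-oriented (and $k$ even); for $k$ odd, $sr(C_k^\sigma)\ge k-1$ with equality iff $C_k^\sigma$ is oddly-or-otherwise-oriented — more precisely $sr(C_k^\sigma)=k-1$ for every orientation when $k$ is odd. Meanwhile $G^\sigma-C_k^\sigma$ is a forest with at least one edge whenever $n>k$ and the attachment is nontrivial, so $sr(G^\sigma-C_k^\sigma)\ge 2$. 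When $k$ is odd, minimality $sr(G^\sigma)=k+1=(k-1)+2$ forces $sr(G^\sigma-C_k^\sigma)=2$; when $k$ is even, $sr(G^\sigma)=k=(k-2)+2$ forces simultaneously $sr(C_k^\sigma)=k-2$ (hence $C_k^\sigma$ evenly-oriented) and $sr(G^\sigma-C_k^\sigma)=2$. In both cases $sr(G^\sigma-C_k^\sigma)=2$ means, by Lemmas~\ref{basic lemma}(ii) and \ref{rank of tree}, that exactly one component of the forest $G^\sigma-C_k^\sigma$ is a nontrivial oriented star and the rest are isolated vertices. It remains to argue that this star is attached to $C_k$ through a single edge from its center; if a leaf of the star (or a non-center vertex) were the attachment vertex, or if the center attached to two vertices of the cycle, one produces either a saturated cycle vertex (contradicting the case hypothesis) or a larger matching forcing $sr>k$ — this combinatorial case-check pins down the shape $G\cong U^*$. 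For $k$ even we have already derived $C_k^\sigma$ evenly-oriented; for $k$ odd there is no even cycle, so the orientation is arbitrary.

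The main obstacle I anticipate is the last combinatorial step of part~(2): ruling out every way of attaching the unique nontrivial star to $C_k$ except "via a single edge incident to the star's center," while staying consistent with the hypothesis that no cycle vertex is saturated in its rooted tree. One must carefully analyze, for each candidate attachment pattern, whether it creates a matching saturating a cycle vertex through its pendant tree (which would throw us into case~(1)) or inflates the matching number enough that $\beta(G^\sigma)$ — and hence $sr(G^\sigma)$ via the earlier skew-rank/matching-number theorem — exceeds the target value; the bookkeeping of which vertices of $C_k$ can be covered simultaneously by a maximum matching, and how the rooted-tree saturation condition translates into a forbidden-configuration statement, is where the argument needs the most care. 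A secondary subtlety is making the "equality forces $P_{k-1}$ plus isolated vertices" step in part~(1) fully rigorous: one should note that any nontrivial component of $G^\sigma-G^\sigma\{v\}$ other than the span of the broken cycle, or any branching off the broken cycle, strictly increases the skew-rank via Lemma~\ref{deleting pendent vertex}, and invoke Lemma~\ref{basic lemma}(i) on the induced $P_{k-1}^\sigma$ to get the lower bound that equality saturates.
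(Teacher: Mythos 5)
Your overall strategy is the same as the paper's: decompose $sr(G^\sigma)$ via Theorem \ref{inertia of unicyclic graphs} and squeeze the two summands against the assumed minimal value. Most of the argument goes through, but one step in your part (1) is actually false. You claim that the equality $sr(G^\sigma-G^\sigma\{v\})=k-2$ (for $k$ even) forces $G^\sigma-G^\sigma\{v\}$ to be a $P_{k-1}$ together with isolated vertices. It does not. Take $k=6$, label the cycle $vu_1u_2u_3u_4u_5$, attach a pendant $p$ to $v$ and a pendant $w$ to $u_2$. Then $v$ is saturated in the star $G^\sigma\{v\}$, while $G^\sigma-G^\sigma\{v\}$ is the path $u_1u_2u_3u_4u_5$ with an extra pendant $w$ at $u_2$; its matching number is still $2=\frac{k-2}{2}$ (a perfect matching would have to pair $w$ with $u_2$, stranding $u_1$), so $sr(G^\sigma)=2+4=6=k$ and the hypothesis of the theorem holds, yet your claimed structure fails. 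Fortunately the conclusion you want, $m(G^\sigma-G^\sigma\{v\})=\frac{k-2}{2}$, does not need that structure at all: $G^\sigma-G^\sigma\{v\}$ is a forest, so Lemma \ref{rank of tree} gives $m(G^\sigma-G^\sigma\{v\})=\frac{1}{2}\,sr(G^\sigma-G^\sigma\{v\})$ directly. This is exactly how the paper argues, writing $2\beta(G^\sigma\{v\})+2\beta(G^\sigma-G^\sigma\{v\})=k$ and using only the bounds $\beta(G^\sigma\{v\})\geq 1$ and $\beta(G^\sigma-G^\sigma\{v\})\geq\frac{k-2}{2}$. Delete the structural detour and keep just the matching-number count.

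In part (2) there is a smaller gap: the inequality $sr(G^\sigma-C_k^\sigma)\geq 2$ is not automatic from $n>k$, since if every tree hanging off the cycle consisted of pendant vertices attached directly to cycle vertices, then $G^\sigma-C_k^\sigma$ would be edgeless. You must invoke the case hypothesis here: such a pendant would make its cycle neighbor saturated in its rooted tree, which is excluded. (The paper makes precisely this point in its Case 2, ruling out $\beta(G^\sigma-C_k^\sigma)=0$ by contradiction with the non-saturation assumption; you instead assume ``the attachment is nontrivial,'' which is what needs proving.) Your final step identifying $G\cong U^*$ --- that the unique nontrivial star component must attach to the cycle by a single edge at its center, and that $G^\sigma-C_k^\sigma$ can contain no further isolated vertices --- is only sketched, but the sketch is sound: any other attachment, and any extra pendant hanging directly off the cycle, creates a cycle vertex saturated in its rooted tree. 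The paper itself leaves this last identification implicit, so on this point you are, if anything, more careful than the source.
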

\begin{proof}
Assume that there exists a vertex $v\in V(C_k^\sigma)$ which is
saturated in $G^\sigma\{v\}$. Note that $G^\sigma\{v\}$ and
$G^\sigma-G^\sigma\{v\}$ are two trees. If $k$ is even, by Lemmas
\ref{rank of tree} and \ref{inertia of unicyclic graphs} we have
\begin{eqnarray*}
sr(G^\sigma)&=&sr(G^\sigma\{v\})+sr(G^\sigma-G^\sigma\{v\})\\
&=&2\beta(G^\sigma\{v\})+2\beta(G^\sigma-G^\sigma\{v\})=k
\end{eqnarray*}

Since $\beta(G^\sigma\{v\})\geq 1$,
$\beta(G^\sigma-G^\sigma\{v\})\geq \frac{k-2}{2}$, so
$\beta(G^\sigma\{v\})= 1$ and $\beta(G^\sigma-G^\sigma\{v\})=
\frac{k-2}{2}$, which implies $G^\sigma\{v\}$ is an oriented star.
From the above process, we can find that this result is independent
of the orientations of edges. So $G^\sigma$ has any orientation.

Similarly the result holds for the case that $k$ is odd.

Suppose that there does not exist a vertex $v\in V(C_k^\sigma)$
which is saturated in $G^\sigma\{v\}$. By Theorem \ref{inertia of
unicyclic graphs}, we have
$$sr(G^\sigma)=sr(C_k^\sigma)+2\beta(G^\sigma-C_k^\sigma).$$

Next we deal with the following three cases.

\textbf{Case 1.} $k$ is odd.

By Lemma \ref{path and cycle} and the above equality, we have
$k+1=k-1+2\beta(G^\sigma-C_k^\sigma)$. It follows that
$\beta(G^\sigma-C_k^\sigma)=1$, i.e. $G^\sigma-C_k^\sigma$ is a
star, and $G^\sigma$ has any orientation.

\textbf{Case 2.} $k$ is even and $C_k^\sigma$ is oddly-oriented.

By the discussion in Case 1, we have $\beta(G^\sigma-C_k^\sigma)=0$.
This contradicts to the fact that there does not exist a vertex
$v\in V(C_k^\sigma)$ which is saturated in $G^\sigma\{v\}$. So this
case can not happen.

\textbf{Case 3.} $k$ is even and $C_k^\sigma$ is evenly-oriented.

By the above discussion, we have $\beta(G^\sigma-C_k^\sigma)=1$,
i.e. $G^\sigma-C_k^\sigma$ is an oriented star.
\end{proof}

\section{Non-singularity of skew-adjacency matrices of oriented
unicyclic graphs}

Let $\mathscr{U}_{n,k}$ be the set of oriented unicyclic graphs of
order $n$ with girth $k$. Let $\mathscr{U}_1$ be the set of oriented
unicyclic graphs of order $n$ with girth $k$ which can be changed to
be an empty (null) graph by finite steps of $\delta$-transformation.
Let $\mathscr{U}_2$ be the set of oriented unicyclic graphs of order
$n$ with girth $k$ which can be changed to be an oriented cycle
$C_k^\sigma$ or the union of isolated vertices and $C_k^\sigma$ by
finite steps of $\delta$-transformation. Obviously,
$\mathscr{U}_{n,k}=\mathscr{U}_1\cup \mathscr{U}_2$.

\begin{theorem}\label{maximum value}
Let $G^\sigma$ be an oriented unicyclic graph of order $n$ with
girth $k$ $(k<n)$. Then
\begin{enumerate}
\item If $G^\sigma\in \mathscr{U}_1$, then $sr(G^\sigma)\leq \left\{\begin{array}{cc}n,&\mbox{$n$ is even,}\\n-1,&\mbox{$n$ is odd.}\end{array}\right.$
\item If $G^\sigma\in \mathscr{U}_2$, then $sr(G^\sigma)\leq
\left\{\begin{array}{cc}n-1,&\mbox{$n$ is odd, $k$ is odd,}\\
n-2,&\mbox{$n$ is even, $k$ is odd,}\\
n,&\mbox{$n$ is even and $C_k^\sigma$ is oddly-oriented,}\\
n-1,&\mbox{$n$ is odd and $C_k^\sigma$ is oddly-oriented,}\\
n-2,&\mbox{$n$ is even and $C_k^\sigma$ is evenly-oriented,}\\
n-3,&\mbox{$n$ is odd and $C_k^\sigma$ is
evenly-oriented.}\end{array}\right.$
\end{enumerate}
\end{theorem}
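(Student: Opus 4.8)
The plan is to read off the bounds directly from the defining reduction sequences of $\mathscr{U}_1$ and $\mathscr{U}_2$, keeping track simultaneously of the skew-rank and of the number of vertices deleted, and then to tighten the estimates using the fact that the skew-rank is always even.

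Suppose first that $G^\sigma\in\mathscr{U}_1$. By definition there is a sequence of, say, $t$ $\delta$-transformations taking $G^\sigma$ to the empty graph. Each such step deletes exactly two vertices (a pendant vertex and its unique neighbor) and, by Lemma \ref{deleting pendent vertex}, lowers the skew-rank by exactly $2$; and the empty graph has skew-rank $0$ by Lemma \ref{basic lemma}(iii). Telescoping gives $sr(G^\sigma)=2t$, while the graph surviving the $t$ steps has $n-2t\geq 0$ vertices, so $2t\leq n$. Hence $sr(G^\sigma)\leq n$, and since $sr(G^\sigma)$ is even, when $n$ is odd this forces $sr(G^\sigma)\leq n-1$; this is part (1). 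Now suppose $G^\sigma\in\mathscr{U}_2$. Then some sequence of $t$ $\delta$-transformations takes $G^\sigma$ to $C_k^\sigma$ together with possibly some isolated vertices, so by Lemma \ref{deleting pendent vertex} and Lemma \ref{basic lemma}(ii)--(iii) we get $sr(G^\sigma)=sr(C_k^\sigma)+2t$; moreover the $k$ vertices of $C_k^\sigma$ survive all the steps, so the $2t$ deleted vertices avoid them and $2t\leq n-k$.

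It remains to combine $sr(G^\sigma)=sr(C_k^\sigma)+2t$ with $2t\leq n-k$ and the value of $sr(C_k^\sigma)$ supplied by Lemma \ref{path and cycle}, namely $sr(C_k^\sigma)=k-1$ for $k$ odd, $sr(C_k^\sigma)=k$ when $C_k^\sigma$ is oddly-oriented, and $sr(C_k^\sigma)=k-2$ when $C_k^\sigma$ is evenly-oriented. The only point requiring attention --- and the mild ``obstacle'' of the proof --- is the parity bookkeeping responsible for the extra ``$-1$'' in the odd cases: whenever $n-k$ is odd (that is, $n$ and $k$ have opposite parity) the inequality $2t\leq n-k$ sharpens to $2t\leq n-k-1$ because $2t$ is even. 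Feeding this in yields each of the six stated bounds: e.g.\ $k$ odd with $n$ even gives $sr(G^\sigma)\leq (k-1)+(n-k-1)=n-2$, while $C_k^\sigma$ evenly-oriented with $n$ odd gives $sr(G^\sigma)\leq (k-2)+(n-k-1)=n-3$; the subcases in which $n-k$ is even are immediate. Assembling the six estimates proves part (2).
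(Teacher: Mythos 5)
Your proposal is correct and follows essentially the same route as the paper: reduce via $\delta$-transformations (Lemma \ref{deleting pendent vertex}), bound the number of steps by the vertices available, and invoke Lemma \ref{path and cycle} for the residual cycle, with the parity adjustment that the paper encodes via $\lfloor\frac{n}{2}\rfloor$ and $\lfloor\frac{n-k}{2}\rfloor$ made explicit in your case analysis.
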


\begin{proof}
If $G^\sigma\in \mathscr{U}_1$, then by at most
$\lfloor\frac{n}{2}\rfloor$ steps of $\delta-$transformation
$G^\sigma$ can be changed to an empty (null) graph. By Lemma
\ref{deleting pendent vertex}, $sr(G^\sigma)\leq 2\cdot
\lfloor\frac{n}{2}\rfloor$.

If $G^\sigma\in \mathscr{U}_2$, then by at most
$\lfloor\frac{n-k}{2}\rfloor$ steps of $\delta-$transformation
$G^\sigma$ can be changed to be oriented cycle $C_k^\sigma$ or the
union of isolated vertices and $C_k^\sigma$. By Lemma \ref{deleting
pendent vertex}, $sr(G^\sigma)\leq 2\cdot
\lfloor\frac{n-k}{2}\rfloor+sr(C_k^\sigma)$. The result holds by
Lemma \ref{path and cycle}.
\end{proof}

In what follows we consider the non-singularity of skew-adjacency
matrices of oriented unicyclic graphs. As we know, if the order $n$
is odd, then the oriented unicyclic graph must be singular. So we
only need consider the oriented unicyclic graph with even order. By
Theorem \ref{maximum value}, we have
\begin{theorem}
Let $G^\sigma$ be an oriented unicyclic graph with even order $n$.
Then $S(G^\sigma)$ is nonsingular if and only if $G^\sigma\in
\mathscr{U}_1$ and $G^\sigma$ has a perfect matching, or
$G^\sigma\in \mathscr{U}_2$, $C_k^\sigma$ is oddly-oriented and
$G^\sigma-C_k^\sigma$ has a perfect matching.
\end{theorem}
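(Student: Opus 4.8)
The plan is to reduce the nonsingularity question to a rank count via Theorem \ref{maximum value}, using the dichotomy $\mathscr{U}_{n,k}=\mathscr{U}_1\cup\mathscr{U}_2$. Since $n$ is even, $S(G^\sigma)$ is nonsingular exactly when $sr(G^\sigma)=n$, so I want to determine precisely when the upper bounds of Theorem \ref{maximum value} are attained with value $n$. First I would split into the two cases $G^\sigma\in\mathscr{U}_1$ and $G^\sigma\in\mathscr{U}_2$. In the first case, Theorem \ref{maximum value}(1) gives $sr(G^\sigma)\le n$ (as $n$ is even), with equality forced to require that every $\delta$-transformation step removes a genuine pendant--quasipendant pair until the graph is emptied, i.e. the $\lfloor n/2\rfloor=n/2$ applications of Lemma \ref{deleting pendent vertex} each contribute $2$ to the skew-rank. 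I would argue that this equality occurs if and only if $G^\sigma$ has a perfect matching: the sequence of pendant edges peeled off in the $\delta$-transformations, when it terminates in the empty graph, is itself a perfect matching of $G^\sigma$; conversely, if $sr(G^\sigma)=n$ then $\beta(G^\sigma)=n/2$ by the matching-number bound $sr\le 2\beta$ implicit in the coefficient formula (Lemma \ref{basic subgraph}), so $G^\sigma$ has a perfect matching.

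For the second case $G^\sigma\in\mathscr{U}_2$, I would read off from the six-way bound in Theorem \ref{maximum value}(2) that, with $n$ even, $sr(G^\sigma)=n$ is possible only in the row ``$n$ is even and $C_k^\sigma$ is oddly-oriented'', where the bound is exactly $n$; in every other even-$n$ subcase the bound is $n-2$, hence $S(G^\sigma)$ is singular there. So it remains to show that when $C_k^\sigma$ is oddly-oriented, $sr(G^\sigma)=n$ iff $G^\sigma-C_k^\sigma$ has a perfect matching. For the direction ``$\Rightarrow$'', note the bound $sr(G^\sigma)\le 2\lfloor (n-k)/2\rfloor+sr(C_k^\sigma)$ from the proof of Theorem \ref{maximum value}, and $sr(C_k^\sigma)=k$ for an oddly-oriented cycle by Lemma \ref{path and cycle}; attaining $n$ forces $2\lfloor(n-k)/2\rfloor=n-k$ (so $n-k$ is even, consistent with $n,k$ parities here) and forces each of the $(n-k)/2$ $\delta$-steps used to reduce $G^\sigma$ to $C_k^\sigma$ to be effective, so the peeled-off pendant edges form a perfect matching of $G^\sigma-C_k^\sigma$. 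For ``$\Leftarrow$'', if $G^\sigma-C_k^\sigma$ has a perfect matching with $(n-k)/2$ edges, then repeatedly applying Lemma \ref{deleting pendent vertex} along this matching (each leaf of the forest $G^\sigma-C_k^\sigma$ is pendant in the successive graphs) reduces $G^\sigma$ to $C_k^\sigma$ while adding $2$ each time, giving $sr(G^\sigma)=(n-k)+sr(C_k^\sigma)=(n-k)+k=n$.

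The main obstacle I anticipate is the bookkeeping in the ``$\Leftarrow$'' direction of the $\mathscr{U}_2$ case: one must be sure that the forest $G^\sigma-C_k^\sigma$, hanging off $C_k^\sigma$, can actually be dismantled by $\delta$-transformations \emph{confined to the forest}, i.e. that a perfect matching of the forest can be peeled leaf-by-leaf without ever needing to delete a vertex of $C_k^\sigma$; this is a standard fact about perfectly-matched forests (a leaf is matched to its unique neighbor, delete both, induct), but it should be stated carefully, perhaps invoking Lemma \ref{inertia of unicyclic graphs}(2), which already packages ``$G^\sigma\{v\}$ never saturated on the cycle'' into $sr(G^\sigma)=sr(C_k^\sigma)+sr(G^\sigma-C_k^\sigma)$ and, combined with Lemma \ref{rank of tree} ($sr=2\beta$ on forests) and Lemma \ref{path and cycle}, yields both directions cleanly. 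A secondary subtlety is making the parity constraints explicit: in $\mathscr{U}_2$ with $n$ even and $k$ odd one has $n-k$ odd, so the cycle-free remainder cannot have a perfect matching and indeed the bound is only $n-2$; listing these parity checks alongside the case analysis is the routine but necessary part.
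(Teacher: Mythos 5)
Your overall plan is the right one and is, in substance, what the paper does: the paper's entire proof of this theorem is the single line ``By Theorem \ref{maximum value}, we have \dots'', so the content is precisely the case analysis you lay out (nonsingular $\Leftrightarrow sr(G^\sigma)=n$, split along $\mathscr{U}_{n,k}=\mathscr{U}_1\cup\mathscr{U}_2$, use the exact identity $sr(G^\sigma)=2m+sr(\text{end state})$ coming from the $\delta$-transformation rather than just the inequality). Your treatment of the $\mathscr{U}_2$ case is complete in both directions, and the bookkeeping worry you flag there is real but resolvable exactly as you say (either peel a leaf of a tree component other than its root, which is genuinely pendant in $G^\sigma$, or invoke Theorem \ref{inertia of unicyclic graphs}(2) together with $sr=2\beta$ on forests).

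The one concrete gap is in the $\mathscr{U}_1$ case: you assert the equivalence ``$sr(G^\sigma)=n$ iff $G^\sigma$ has a perfect matching'' but both arguments you give point the same way. The peeled-edge observation shows that a reduction terminating in the vertexless graph yields a perfect matching, and the bound $sr\le 2\beta$ shows $sr=n\Rightarrow\beta=n/2$; these are both the ``only if'' direction. What is never argued is ``perfect matching $\Rightarrow sr(G^\sigma)=n$'', which is the direction actually needed for sufficiency. It is not hard to supply: in any graph with a perfect matching $M$, a pendant vertex $v$ must be matched in $M$ to its unique neighbour $u$, so each $\delta$-step sends a graph with a perfect matching to a graph with a perfect matching; since $G^\sigma\in\mathscr{U}_1$ the process ends in an empty graph, which then has a perfect matching and hence no vertices at all, giving $sr(G^\sigma)=2\cdot(n/2)=n$. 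With that sentence added (and the analogous remark already implicit in your $\mathscr{U}_2$ ``$\Leftarrow$'' argument), the proof is complete.
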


\frenchspacing


\begin{thebibliography}{99}

\bibitem{adiga} C. Adiga, R. Balakrishnan, Wasin So, The skew-energy
of a digraph, {\it  Linear Algebra Appl. } 432 (2010) 1825--1835.

\bibitem{anuradha} A. Anuradha, R. Balakrishnan, {\it Skew spectrum of the
Cartesian product of an oriented graph with an oriented Hypercube},
Eds. R.B. Bapat, S.J. Kirkland, K.M. Prasad, S. Puntanen,
Combinatorial Matrix Theory and Generalized Inverses of Matrices,
Springer (2013), 1--12.

\bibitem{anuradha2} A. Anuradha, R. Balakrishnan, X. Chen, X. Li, H.
Lian, Wasin So, Skew spectra of oriented bipartite graphs, {\it
Electron. J. Combin.} 20(4) (2013) $\sharp P19$.

\bibitem{Cavers}M. Cavers, S.M. Cioab\u{a}, S. Fallat, D. A. Gregory, W.H. Haemers, S.J. Kirkland,
J.J. McDonald, M. Tsatsomeros, Skew-adjacency matrices of graphs,
{\it Linear Algebra Appl.} 436 (2012) 4512--4529.

\bibitem{chen} X. Chen, X. Li, H. Lian, $4$-regular oriented graphs
with optimum skew energy, {\it Linear Algebra Appl.} 439 (2013)
2948--2960.

\bibitem{cvetkovic}D. Cvetkovi\'c, M. Dood, H. Sachs, Spectra of Graphs, {\it Academic Press,} New York, 1980.

\bibitem{cui hou}D. Cui, Y. Hou, On the skew spectra of Cartesian products of graphs. {\it Electron. J. Combin.} 20 (2013) $\sharp
P19$.

\bibitem{fan1}S. Gong, Y. Fan, Z. Yin, On the nullity of graphs with pendant trees, {\it Linear Algebra Appl.} 433 (2010) 1374--1380.

\bibitem{hou1} Y. Hou, T. Lei, Charactristic polynomials of
skew-adjacency matrices of oriented graphs, {\it Electro. J.
Combin.} 18 (2011) $\sharp P156$.

\bibitem{hou2} Y. Hou, X. Shen, C. Zhang, Oriented unicyclic graphs
with extremal skew energy, Available at
\emph{http://arxiv.org/abs/1108.6229.}

\bibitem{gong1} S. Gong, G. Xu, The characteristic polynomial and
the matching polynomial of a weighted oriented graph, {\it  Linear
Algebra Appl.} 436 (2012) 3597--3607.

\bibitem{gong2} S. Gong, G. Xu, 3-Regular digraphs with optimum skew
energy, {\it  Linear Algebra Appl.} 436 (2012) 465--471.

\bibitem{li} X. Li, H. Lian, A survey on the skew energy of oriented
graphs, avaiable at \emph{http://arXiv.org/abs/1304.5707}.

\bibitem{shader} B. Shader, Wasin So, Skew spectra of oriented
graphs, {\it Electron. J. Combin.} 16 (2009) $\sharp N32$.

\bibitem{hou3} X. Shen, Y. Hou, C. Zhang, Bicyclic digraphs with
exremal skew energy, {\it Electron. J. Linear Algebra} 23 (2012)
340--355.

\bibitem{smith} J.H. Smith, Some properties of the spectrum of a
graph, In: Combinatorial Structures and Their Application (ed. R.
Gay, H. Hanani, N. Sauer, J. Schonheim), Gordon and Breach, New
York-London-Paris, 1970, 403--406.

\bibitem{tian} G. Tian, On the skew energy of orientations of
hypercubes, {\it  Linear Algebra Appl.} 435 (2011) 2140--2149.

\bibitem{xu} G. Xu, Some inequlities on the skew-spectral radii of
oriented graphs, {\it J. Inequal. Appl.} (2012) Art no.211.

\bibitem{zhu} J. Zhu, Oriented unicyclic graphs with the first
$\lfloor\frac{n-9}{2}\rfloor$ largest skew energies, {\it  Linear
Algebra Appl.} 437 (2012) 2630--2649.

\end{thebibliography}
\end{document}